\title[Hodge-Arakelov inequalities for family of surfaces fibered by curves]{Hodge-Arakelov inequalities for family of surfaces fibered by curves}
\author{Mohammad Reza Rahmati}
\thanks{}
\address{ Universidad De La Salle Bajío, Campestre - Le\'on, Guanajuato, Mexico.
\hfill\break 
\hfill\break \\
\hfill\break }
\email{mrahmati@cimat.mx}
\newcommand{\comments}[1]{}
\newtheorem{theorem}{Theorem}[section]
\newtheorem{proposition}[theorem]{Proposition}
\newtheorem{lemma}[theorem]{Lemma}
\newtheorem{remark}[theorem]{Remark}
\newtheorem{example}[theorem]{Example}
\keywords{Variation of Hodge Structure, Kodaira-Spencer maps, Curvature of Hodge bundles, Arakelov-inequality, Fujita decomposition }
\subjclass{14C17, 14D05, 14D06, 14D07, 14J27,
14J28, 14J32, 32G20}
\begin{document}

\begin{abstract}
The Hodge numerical invariants of a variation of Hodge structure over a smooth quas--projective variety are a measure of complexity for the global twisting of the limit mixed Hodge structure when it degenerates. These invariants appear in inequalities which they may have correction terms, called Arakelov inequalities. One may investigate the correction term to make them into equalities, also called Arakelov equalities. We investigate numerical Arakelov type (in)equilities for a family of surfaces fibered by curves. Our method uses Arakelov identities in a weight 1 and also in a weight 2 variations of Hodge structure (cf. \cite{GGK}), in a commutative triangle of fibrations. We have  proposed to relate the degrees of the Hodge bundles in the two families. We also compare the Fujita decomposition of Hodge bundles in these fibrations. We examine various identities and relations between Hodge numbers and degrees of the Hodge bundles in different levels. 
\end{abstract}

\maketitle

\section{Introduction}

Of important invariants of a variation of Hodge structure (VHS) that measure the complexity of their twist are the chern classes of their Hodge bundles. A question in this line is how the degeration of VHS affect these chern classes. The numerical data of a geometric variation of Hodge structure also  provide tools to study the iso-triviality of the family. They also give bounds to determine global invariants associated to the singularities of the fibrations. For example one can answer simple questions on fibrations in low dimensions that, if they must have singular fibers and even some bounds for the number of singular fibers. The Arakelov identities involve the degrees of the Hodge bundles in VHS given by geometric fibration. In fact the vanishing of the degrees of all the Hodge bundle give a criteria to say whether the fibration is iso-trivial. These equalities have been systematically studied in low dimensions 1, 2, 3 and 4, \cite{GGK, VZ}. 
    
Assume we have a fibration $f:X \to S$ of smooth projective varieties with $S$ being of dimension 1. For each $k$ it defines a variation of Hodge structure $(\mathcal{H}^{(k)}=R^kf_*\mathbb{C},F^p, \nabla)$ of weight $k$ over $S$ which is the Deligne extension of that over the smooth locus of the fibration. Then the graded sheaves $(\mathcal{H}^{p,q}=F^p/F^{p-1}, \theta_q:=Gr_F^p \nabla)$ define a Higgs bundle. Following \cite{GGK}, in a weight one VHS $\mathcal{H}_{/S}$ obtained from the fibration by curves we have the following exact sequence of sheaves

\begin{equation}
0 \to \mathcal{H}_{0,e}^{1,0} \to \mathcal{H}_e^{1,0} \stackrel{\theta}{\longrightarrow} \check{\mathcal{H}}_e^{1,0} \otimes \Omega_S^1(\log E) \to \mathcal{B} \to 0
\end{equation}

\noindent
where $\mathcal{H}_e$ is the extended Hodge bundle and $\mathcal{H}_{0,e}^{1,0}:=\ker({\mathcal{H}}_e^{1,0} \to \check{\mathcal{H}}_e^{1,0} \otimes \Omega_S^1)$. We denote the rank of its generic fiber by $h_0^{1,0}$. The map $\theta$ is induced by the Gauss-Manin connection and is called the Kodaira-Spencer map. A calculation of the degree $\delta=\deg(\mathcal{H}^{1,0})$, [see  \cite{GGK}] gives the following general formula

\begin{equation}
\delta=\frac{1}{2}(h^{1,0}-h_0^{1,0})(2g(S)-2)+\delta_0-\frac{1}{2}\sum_{s \in S} \nu_s(\bar{\theta})
\end{equation}

\noindent
where $\delta_0=\deg(\mathcal{H}_{0,e}^{1,0})$, and $\nu_s(\bar{\theta}):=\dim \text{coker} \left \{\mathcal{H}_e^{1,0}/\mathcal{H}_{0,e}^{1,0} \stackrel{\bar{\theta}}{\longrightarrow} (\mathcal{H}_e^{1,0}/\mathcal{H}_{0,e}^{1,0})^{\perp} \otimes \Omega_S^1 \right \}$. Similar identities exists in higher weights. The above kind of identities are called Arakelov identities. The Arakelov identities measure the complexity of the global twist of the VHS, according to the existence of the degeneracies, see \cite{GGK} for details.

Another piece of information we encounter in our analysis is the Fujita decomposition of the Hodge bundle, \cite{CK, PT, Vi, VZ, CaDe, CaDe2}. Assume we have the fibration $f:X \to S$ as before [see Section 4 for exact set-up]. In general the Fujita decomposition is of the form
\begin{equation}
f_*\omega_{X/S}=\mathcal{A} \bigoplus \mathcal{U}' \bigoplus \mathcal{O}_S^{q_f}, \qquad 
\end{equation}

\noindent
where $\omega_{X/S}$ is the bundle of holomorphic differential forms $\Omega_X^{\dim X}$, and $\mathcal{A}$ is an  ample vector bundle, $\mathcal{U}=\mathcal{O}_S^{q_f} \bigoplus \mathcal{U}'$ is a flat unitary vector bundle. $\mathcal{U}'$ is also flat but has no sections. The Gauss-Manin connection on $\mathbb{H}$ induces a connection 

\begin{equation}
\nabla_{\mathcal{U}}: \mathcal{U} \to \mathcal{U} \otimes \Omega_S^1
\end{equation}

\noindent
It is known by construction that $\mathcal{U} \subset \ker(\theta)$. Let  
\begin{equation}
\kappa^{n,0} :\mathcal{H}^{n,0} \stackrel{\theta^n}{\rightarrow} \mathcal{H}^{n-1,1} \stackrel{\theta^{n-1}}{\rightarrow} ... \stackrel{\theta}{\rightarrow} \mathcal{H}^{0,n}
\end{equation} 
be the Griffiths-Yukawa coupling, i.e the composition of the $n$ successive KS-maps. 
The Griffiths-Yukawa coupling is said to be maximal, if $\mathcal{H}^{n,0} \ne 0$ and if $\kappa^{n,0}$ is an isomorphism, [cf. \cite{VZ4}]. Similar to before we set 
\begin{equation} 
\mathcal{H}_0^{n,0}=\ker \left ( \theta^p:\mathcal{H}^{p,q} \to \mathcal{H}^{p-1,q+1} \otimes \Omega_S^1(\log E) \right )
\end{equation} 
and $h_0^{n,0}=\text{rank} \mathcal{H}_0^{n,0}$, with the same notation as previous section. We have the Arakelov inequality
\begin{equation}
\deg(\mathcal{H}^{n,0}) \leq \frac{n}{2}\ . \text{rank}(\mathcal{H}^{n,0}). \deg(\Omega_S^1(\log E))
\end{equation}
with equality if the Higgs field of $\mathcal{H}$ is maximal. Moreover if $\theta^n \ne 0$ then $\deg(\mathcal{H}^{n,0}) >0 , \ \deg(\Omega_S^1(\log E)) >0$. In particular if $S=\mathbb{P}^1$ then $\sharp E \geq 3$. The Fujita decomposition $f_*\omega_{X/S}=\mathcal{A} \bigoplus \mathcal{U}$ into an ample sheaf $\mathcal{A}$ and a flat subsheaf $\mathcal{U}$, satisfies $\deg(\mathcal{H}^{n,0})=\deg(\mathcal{A})$. Moreover the Higgs field of $\mathcal{A}$ is strictly maximal and $\mathcal{U}$ is a variation of polarized complex Hodge structure zero in the bidegree $(n,0)$, [see \cite{VZ4} lemma 4, for instance]. 

\subsection{Contributions} 

We apply this formula for a VHS of surfaces $f:X \to S$ fibered by curves $Y_s$. One obtains a triangle fibration 

\begin{equation}
\xymatrix{
  & X \ar[dl]_{h} \ar[dr]^{f}
  \ar@{}[d]|-{\circlearrowleft} \\
  Y \ar[rr]_{g}
  && S
}
\end{equation} 

\noindent
Let $\mathcal{H}, \ \mathcal{H}(1)$ and $\mathcal{H}(2)$ be the associated local systems of $H^1$ of fibers in the triangle. Then an application of the Grothendieck 5-term sequence gives the following diagram of sheaves with Kodaira-Spencer maps 
\begin{equation}
\begin{CD}
g_*\Omega_{Y/S}^1 (\log F)@>>>  f_*\Omega_{X/S}^1(\log G) @>>> f_*\Omega_{X/Y}^1(\log D)   \\
@VV{\theta^{1}}V    @VV{\theta}V  @VVg_*{\theta^{2}}V \\
R^1g_*\mathcal{O}_Y \otimes \Omega_S^1(\log E) @>>> R^1f_*\mathcal{O}_X \otimes \Omega_S^1 (\log E) @>>> g_* (R^1h_*\mathcal{O}_X) \otimes \Omega_Y^1(\log E) \\  
\end{CD}
\end{equation}
\noindent
With $\log $ we mean logarithmic sheaves with appropriate normal crossing divisors, that assumed to be compatible in the fibration [see Section 2 for the set up and Definitions]. In the text, we sometimes denote the suffix $\theta_Y, \theta_{X/Y}$ to distinguish the KS-map. We study how the contributions in Arakelov equation (2) for the middle vertical KS-map splits in the other two. We discuss the following 

\begin{itemize}
\item There is a decomposition $\mathcal{H}=\mathcal{H}_{\text{fix}} \bigoplus \mathcal{H}_{\text{var}}$ into fixed and variable parts where 

\begin{equation} 
\mathcal{H}_{\text{var}}:=\{image(f_*\Omega_{X/S}^1 \to f_*\Omega_{X/Y}^1)\}^{\perp}
\end{equation} 

\noindent 
and $\mathcal{H}_{\text{fix}}$ is identified with the image. Then $g_*\left (\theta^{2}(\mathcal{H}_{var})\right )=0$

\item In the course to compare $\delta, \delta^{1}$, and $\delta^{2}$ there should be a relation 

\begin{equation}
\nu(\theta) = \nu(\theta^{1})+\nu(g_*\theta^{2}\mid_{\text{fix}})
\end{equation}

\noindent
where $\nu$ is computed on the points where KS-map drops rank, rather they are singularities or not.

\item The contribution for the terms $\delta_0$ ,s is additive
 
\begin{equation}
\delta_0=\delta_0^{1}+\delta_0(\mathcal{H}^{1,0}(2)_{\text{fix}})
\end{equation}

\item The $\mathcal{H}_{\text{var}}$ contribute to the similar invariants for the weight 2 VHS in $H^2(X_s, \mathbb{C})$. 
\end{itemize}

\vspace{0.3cm} 

\noindent 
We also compare the Fujita decomposition for the Hodge bundles of VHS over ${Y}$ and $S$. We have a similar decomposition

\begin{equation}
f_*\omega_{X/S}=f_*\omega_{Y/S} \oplus f_*\omega_{X/S}^{\text{var}}
\end{equation}

\noindent 
One has the estimates for the ranks of the direct summands of Fujita decomposition for the 3 fibrations. In a basic analysis we get that 
\begin{itemize}
\item $H_e^{1,0} =H_e^{1,0}(1) \bigoplus H_e^{1,0}(2)_{\text{fix}}$.
\item $\mathcal{U}_f=\mathcal{U}_g \bigoplus \mathcal{U}_h^{\text{fix}}$.
\item $h^{1,0}=h(1)^{1,0}+h_{\text{fix}}^{1,0}(2)$. 
\item $h_0^{1,0}=h_0^{1,0}(1)+h_{0,\text{fix}}^{1,0}(2)$.
\item $\delta_0 = \delta_0^{1}+\left (\delta_0^{2}\right )_{\text{fix}}$.
\item $\nu(\theta)= \nu(\theta^{1})+\nu(g_*\theta^{2}\mid_{\text{fix}})$.
\item $\delta^{1,0}=\delta^{1}+\delta^{2}_{\text{fix}}=\delta^{1}+\delta^{2}-\delta^{2}_{\text{var}}$.
\item $u_f=u_g+u_h^{\text{fix}}=u_g+u_h-u_h^{\text{var}}$.
\end{itemize}

\noindent 
By the local product structure of the double fibration in (8) and Kodaira-Spencer maps induced from the Gauss-Manin connections we deduce the existence of an induced map 

\begin{equation}
\overline{\vartheta^{2,0}}: \dfrac{g_*\left (R^1h_*\Omega_{X/Y,var}^0\right )}{\theta^{2}_{\text{var}} \left ( g_* (R^0h_*\Omega_{X/Y,var}^1) \right )} \ \longrightarrow \ \dfrac{R^2f_*\Omega_{X/S}^0 \otimes (\Omega_S^1)^{\otimes 2}}{\theta^{1,1}\left ( R^1f_*\Omega_{X/S}^1 \otimes \Omega_S^1 \right )}
\end{equation}

\noindent
which is injective. We shall use the existence of this map to obtain Arakelov type identities in a two step fibration. We prove the following identity on the degrees of the Hodge bundles in a family of surfaces fibered by curves

\begin{equation}
\delta^{2,0}+2(\delta^{1,0}-(\delta^{1}+\delta^{2}) \geq (h^{1,1}-h^{2,0})(2g-2)^2-h^{1,0}(2)(2g-2)
\end{equation}

\noindent
By adjusting the existence of a local product structure on the fibred surface we can obtain another inequality which relates the the Hodge numbers in weight two HS to the one of weight one on the middle curves. Another inequality we prove is

\begin{equation}
\delta_X^{2,0}\geq \frac{1}{2}(h_{X/Y}^{1,0}-h_{0,X/Y}^{1,0})(2g(S)-2)+ h_{X/Y}^{1,0}
\end{equation}

\noindent
Moreover we have $\delta_X^{2,0}\ \geq \ \delta_{X/Y}^{1,0}$. The VHS of the middle cohomology of the fibration $Y \to S$ appears as a middle convolution of the one for $X \to S$, \cite{SD}. We give the following generalization of a result in \cite{SD} on the invariants of the middle convolution of the local systems of HS of surfaces. The result there was proved over $S=\mathbb{P}^1$ which is more simpler. Our statement is as follows. If the local system $\mathcal{V}=g_* \left (R^1h_*\mathbb{C} \right )$ is irreducible with regular singularities, then 

\begin{equation}
\chi(S, \mathcal{V})=\delta_{X/Y}^1+h^1(V)-\delta_{X/Y}^0+h^0(V)-\sum_s\nu_s(\bar{\theta}_{X/Y})(V).
\end{equation}

\vspace{0.5cm}

\noindent 
\textbf{Organization:} Section 2 is classical and we just introduce the notation and concepts we wish to employ. References are \cite{Har, G3, PS, JS1, JS2, JS3}. Section 3 describes numerical invariants of variation of Hodge structures and closely follows \cite{GGK}. The relevant references are \cite{Sch1, PS, Vi, St1, Hod, AGV, B, D1, G1, KUL, SC2, SCHU, JS1, JS2, JS3}. Section 4 explains Fujita decomposition. The relevant references are \cite{G, CK, PT, Kaw1, Fuj1, Fuj2, CaDe, CaDe2}. In section 5 we present some contributions and calculations by the numerical invariants.  

\section{Algebraic geometric background}

We briefly review some basics on relative logarithmic differentials on schemes used in later sections. This section is classical, and the reader can find various texts and references describing logarithmic sheaves in slightly different contexts (see \cite{G3, PS, JS1, JS2, JS3}). Lets begin with a smooth fibration of schemes $f:X \to S$ where $X$ and $S$ are smooth and of finite type over an algebraically closed field of char=0. In this case the module of relative q-differentials of $f$ on $X$ 
is defined by the short exact

\begin{equation}
0 \to f^*\Omega_S^1 \wedge \Omega_X^{q-1} \to \Omega_X^q \to \Omega_{X/S}^q \to 0
\end{equation}

\noindent
When you have three schemes 

\begin{equation} 
f:X \stackrel{h}{\rightarrow} Y \stackrel{g}{\rightarrow} Z
\end{equation} 

\noindent
respectively, then we have the following exact sequence

\begin{equation}
0 \to h^*\Omega_{Y/S}^1 \wedge \Omega_{X/S}^{q-1} \to \Omega_{X/S}^q \to \Omega_{X/Y}^q \to 0
\end{equation}

In case $X$ or $S$ not to be regular or $f$ fails to be smooth the above definition is no longer valid. However one still may define it when the singular locus is normal crossing, called logarithmic differentials. In this case we replace our schemes with log-pairs $(X,D)$ and $(S,E)$, where $D$ and $E$ are smooth normal crossing schemes or varieties and $f(D) =E$. According to the Hironaka resolution of singularities theorem this is possible after blowing up the singularities of $X, S$ along $f$ enough times. In this case one can similarly choose  

\begin{equation}
\Omega^q_{X/S}(\log D)=\frac{\Omega^q_{X}(\log D)}{f^*\Omega^1_{S}(\log E) \wedge \Omega^{q-1}_{X}(\log D)}
\end{equation} 

\noindent
as the definition, \cite{PS, JS1, JS2, JS3}. We shall consider three log-pairs 

\begin{equation} 
f: (X,D) \stackrel{h}{\rightarrow} (Y,F) \stackrel{g}{\rightarrow} (S,E)
\end{equation}  

\noindent 
Then we are assuming they are compatible log-smooth pairs in the category of log-schemes. Then the sequence (20) is valid in the new category and by definition reads as

\begin{equation}
0 \to h^*\Omega_{Y/S}^1(\log F) \wedge \Omega_{X/S}^{q-1}(\log D) \to \Omega_{X/S}^q (\log D)\to \Omega_{X/Y}^q (\log h^* F) \to 0
\end{equation} 

\noindent
We will be working with 1-forms and the global sections of the stalk of these sheaves along fibers over $S$. In other word when pushforward by $f_*$ we get

\vspace{0.5cm}

\begin{equation}
\begin{aligned}
0 \to g_*\Omega_{Y/S}^1(\log F) \to f_*\Omega_{X/S}^1 (\log D) & \to f_*\Omega_{X/Y}^1 (\log h^* F) \to R^1g_*\Omega_{Y/S}^1(\log F) 
\to \\
& \to R^1f_*\Omega_{X/S}^1 (\log D)\to R^1f_*\Omega_{X/Y}^1 (\log h^* F)
\end{aligned}
\end{equation}

\noindent
Note that varieties that the items in this exact sequence refer to are the appropriate fibers in our triangle and the normal crossing divisors
give normal crossing sections inside the fibers. Regarding the situation that how the relative singular fibers of the maps $f,g$ and $h$ behave geometrically relative to each other, we hereby and later on are assuming that they are located one above another so that the relative log-pairs are compatible as mentioned above [cf. \cite{PS, JS1, JS2, JS3} loc. cit.]. 

Another fact, is that going to the long exact cohomology sequence of the short exact sequence defining the relative differentials we obtain

\begin{equation}
0 \to \Omega_S^1 \to f_*\Omega_X^1 \to f_*\Omega_{X/S}^1 \stackrel{\kappa=(.\kappa_s)}{\longrightarrow} R^1(f_*f^*)\Omega_S^1=R^1f_*\mathcal{O}_X \otimes \Omega_S^1 \to
\end{equation}

\noindent
Then the connecting homomorphism $\kappa$ is the Kodaira-Spencer map, i.e wedge with the Kodaira-Spencer class. For the last term in the sequence we have used adjunction formula 

\begin{equation}
\mathcal{G} \otimes R^if_* \mathcal{E} =R^if_*f^*\mathcal{G} \otimes \mathcal{E}\end{equation} 

\noindent 
for suitable sheaves $\mathcal{G}, \ \mathcal{E}$. This phenomenon maybe studied in a triangle fibration of smooth varieties, or a compatible log-smooth triple.

We use a basic property of Grothendieck spectral sequences, applied to a first quadrant spectral sequence 

\begin{equation}
E_2^{p,q}=R^pG \circ R^qF (A) \Rightarrow R^{p+q}(G \circ F)(A)
\end{equation}

\noindent
for functors $F:\mathcal{A} \to \mathcal{B}, \ G:\mathcal{B} \to \mathcal{C}$ between abelian categories. It is an exact sequence of terms of low degrees.

\begin{proposition} (Grothendieck 5-term sequence)
If $E_2^{p,q}  \Rightarrow H^n(A)$ is a first quadrant spectral sequence, then there is an exact sequence 

\begin{equation}
0 \to E_2^{1,0} \to H^1(A) \to E_2^{0,1} \to E_2^{2,0} \to H^2(A)
\end{equation}

\noindent
where $E_2^{0,1} \to E_2^{2,0}$ is the differential of the $E_2$-term.
\end{proposition}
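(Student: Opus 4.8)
The plan is to run the standard bookkeeping for a first-quadrant spectral sequence, extracting the edge morphisms from the abutment filtration and computing the relevant $E_\infty$-terms directly. First I would record the convergence data: the abutment $H^n(A)$ carries a decreasing filtration $0=F^{n+1}H^n \subseteq F^nH^n \subseteq \cdots \subseteq F^0H^n=H^n$ with successive quotients $F^pH^n/F^{p+1}H^n \cong E_\infty^{p,n-p}$. Because the spectral sequence lives in the first quadrant, $E_2^{p,q}=0$ whenever $p<0$ or $q<0$, and I would use this repeatedly to kill the incoming and outgoing differentials at the corner terms.

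Next I would identify the three $E_\infty$-terms of low total degree. For $E^{1,0}$ every differential $d_r$ in or out lands in a region with a negative index for $r\ge 2$, so $E_\infty^{1,0}=E_2^{1,0}$. For $E^{0,1}$ the only nontrivial differential is $d_2\colon E_2^{0,1}\to E_2^{2,0}$, hence $E_\infty^{0,1}=\ker\bigl(d_2\colon E_2^{0,1}\to E_2^{2,0}\bigr)$. For $E^{2,0}$ the only incoming differential is that same $d_2$ and all outgoing ones vanish, so $E_\infty^{2,0}=\operatorname{coker}\bigl(d_2\colon E_2^{0,1}\to E_2^{2,0}\bigr)$.

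Then I would assemble the sequence from the filtration steps. In degree one, $F^2H^1=0$ and $F^0H^1=H^1$, so the filtration gives a short exact sequence $0\to E_\infty^{1,0}\to H^1\to E_\infty^{0,1}\to 0$; composing the surjection $H^1\to E_\infty^{0,1}$ with the inclusion $E_\infty^{0,1}=\ker d_2 \hookrightarrow E_2^{0,1}$ produces the edge map $H^1\to E_2^{0,1}$, whose kernel is the image of $E_2^{1,0}=E_\infty^{1,0}$. This yields exactness at $E_2^{1,0}$, at $H^1$, and at $E_2^{0,1}$. In degree two, $F^3H^2=0$ forces $E_\infty^{2,0}=F^2H^2\hookrightarrow H^2$; the edge map $E_2^{2,0}\to H^2$ is the composite of the quotient $E_2^{2,0}\to E_\infty^{2,0}=\operatorname{coker} d_2$ with this inclusion, so its kernel is exactly the image of $d_2$. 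This gives exactness at $E_2^{2,0}$ and completes the five-term sequence.

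The argument has no genuine obstacle; the only care needed is the bookkeeping of which $d_r$ vanish by the first-quadrant hypothesis, together with the correct identification of the two edge morphisms $H^1\to E_2^{0,1}$ and $E_2^{2,0}\to H^2$ through the filtration. One should verify that the map $E_2^{0,1}\to E_2^{2,0}$ appearing in the sequence is literally the differential $d_2$ and not merely a map induced by it, which holds here because $E_2^{0,1}$ and $E_2^{2,0}$ are corner terms carrying no prior subquotient identifications.
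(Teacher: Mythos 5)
Your argument is correct and complete: the identifications $E_\infty^{1,0}=E_2^{1,0}$, $E_\infty^{0,1}=\ker d_2$, $E_\infty^{2,0}=\operatorname{coker} d_2$ all follow from the first-quadrant vanishing exactly as you say, and splicing the filtration short exact sequence on $H^1$ together with the inclusion $E_\infty^{2,0}=F^2H^2\hookrightarrow H^2$ yields the five-term sequence with the correct edge maps. The paper states this proposition as a classical fact and gives no proof at all, so there is nothing to compare against; your write-up is the standard argument one would supply, and your closing remark that the middle map is literally $d_2$ (not merely induced by it) is the right point to flag.
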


\noindent 
When we have two successive fibrations $X \to Y \to S$ we get a form of 5-term sequence as 

\begin{equation}
0 \to R^1g_*\mathcal{O}_Y  \to R^1f_*\mathcal{O}_X \to g_*R^1h_*\mathcal{O}_X  \to R^2g_*\mathcal{O}_Y  \to R^2f_*\mathcal{O}_X 
\end{equation}

\noindent
We will employ a twist of this sequence in computation with Kodaira-Spencer maps.

\section{Hodge-Arakelov numerical data}

The reference for this section is \cite{GGK}, \cite{JS2}. We follow \cite{GGK} briefly to adjust the formulas we use. Assume $f:X \to S$ is a fibration of smooth projective varieties over $\mathbb{C}$. Hodge theory studies the variation of Hodge structure (VHS)constructed from the cohomologies of the smooth fibers $X_s=f^{-1}(s)$. The VHS,s associated to these fibrations form a local system $\mathcal{H}_{/S^*}$ of vector spaces over $\mathbb{Q}$, such that 

\begin{equation}
\mathcal{H}_{\mathbb{C}}=\bigoplus_{p+q=k} \  \mathcal{H}^{p,q}, \qquad \mathcal{H}^{p,q}:=F^p \cap \overline{F^q}
\end{equation} 

\noindent 
are the  ($C^{\infty}$)-Hodge bundles defined over the smooth locus $S^*$. By the Hironaka resolution of singularities one may assume the degeneracy locus $E=S \setminus S^*$ is normal crossing divisor. The classical Hodge theory guarantees the existence of a canonical extension $\mathcal{H}_e$ whose sections have at worst logarithmic poles along the normal crossing divisor $E$. A major task in Hodge theory is to study the degeneration of the Hodge structure near the singular points. Assume $D=f^{-1}(E)$ is also normal crossing. We may draw the picture of our fibration as follows

\vspace{0.3cm}

\begin{center}
$\begin{CD}
D @>>> X @<<<  X\setminus D  \\
@VVV    @VVV  @VVV \\
E @>>> S @<<< S\setminus E 
\end{CD}$
\end{center} 

\vspace{0.3cm}

\noindent
Then the sheaves $R^pf_*\Omega^\bullet_{X/S}(\log D)$ are locally free, and admit a (logarithmic) connection

\begin{equation}
\nabla_e:R^pf_*\Omega^\bullet_{X/S}(\log D) \to \Omega^1_{S}(\log E) \otimes R^pf_*\Omega^\bullet_{X/S}(\log D)
\end{equation} 

\noindent
such that its residue is nilpotent. Moreover there are isomorphisms 

\begin{equation}
H^p(D,\Omega^\bullet_{X/S}(\log D)) \otimes \mathcal{O}_D) \stackrel{\cong}{\longrightarrow} H^p(X_{\infty},\mathbb{C})
\end{equation}

\noindent
Among the invariants associated to VHS,s are the degrees of the Hodge bundles 

\begin{equation}
\delta_p=\deg(F_e^{n-p}/F_e^{n-p+1})
\end{equation}

\noindent
The better are the quantities 

\begin{equation}
\delta_0+\delta_1+...+\delta_p \geq 0
\end{equation}

\noindent 
which measure how far the VHS is being from trivial. Over the smooth locus the Hodge bundles $\mathcal{H}^{p,q}$ are equipped with the Hodge metric induced from the polarization form. The invariants $\delta_p$ may be calculated from the curvature of the associated metric connection. The curvature can be written as

\begin{equation}
\Theta_{\mathcal{H}^{p,q}}=^t\theta_q \wedge \bar{\theta}_q + \bar{\theta}_{q-1} \wedge ^t\theta_{q-1}  
\end{equation}

\noindent
where $\theta_q=Gr_F^p \nabla$ are the Kodaira-Spencer maps and "$^t$" is the Hermitian adjoint. Here we understand that $q=n-p$. The cohomology classes 

\begin{align}
\begin{aligned}
c_1(\Theta_{\mathcal{H}^{p,q}})=\alpha_p-\alpha_{p-1}, \qquad \int_S \alpha_p=a_p\\
c_1(\Theta_{\mathcal{H}_0^{p,q}})=\beta_p-\alpha_{p-1}, \qquad \int_S \beta_p=b_p
\end{aligned}
\end{align}

\noindent
are integrable closed (1,1)-forms and also determine the Chern classes of the extended Hodge bundle; $\mathcal{H}_e^{p,q}$.  

In the weight one VHS,s obtained from the fibration by curves we have the following exact sequence of sheaves

\begin{equation}
0 \to \mathcal{H}_{0,e}^{1,0} \to \mathcal{H}_e^{1,0} \stackrel{\theta^{(1,0)}}{\longrightarrow} \check{\mathcal{H}}_e^{1,0} \otimes \Omega_S^1(\log E) \to \text{coker} \to 0
\end{equation}

\noindent
where $\mathcal{H}_{0,e}^{1,0}:=\ker({\mathcal{H}}_e^{1,0} \to \check{\mathcal{H}}_e^{1,0} \otimes \Omega_S^1)$. We denote the rank of its generic fiber by $h_0^{1,0}$. Note that $\mathcal{B}$ is probably not a vector bundle; i.e may have torsion. It is also probable that the meromorphic $h^{1,0} \times h^{1,0}$-matrix $\theta$ drops rank at non singular values $s$. Because the degree map is additive on exact sequences one can calculate the $\delta=\deg(\mathcal{H}^{p,q})$ in terms of the other terms. For the map on the stalks we have 

\begin{equation}
\theta_s(\mathcal{H}_{e,s}^{1,0}) \subset (\mathcal{H}_{0,e,s}^{1,0})^{\perp} \otimes \Omega_{S,s}(\log E)
\end{equation}

\noindent
One considers set of points where this inclusion is strict. This may also happen at nonsingular points. The calculation of the curvature gives

\begin{equation}
\delta=\frac{1}{2}(h^{1,0}-h_0^{1,0})(2g-2)+\delta_0-\frac{1}{2}\sum_{s \in S} \nu_s(\bar{\theta})
\end{equation}

\noindent
where $\delta_0=\deg(\mathcal{H}_{0,e}^{1,0}), \ \nu_s(\theta):=\dim (\text{coker}(\theta))$ and 

\begin{equation} 
\bar{\theta}: \mathcal{H}_e^{1,0}/\mathcal{H}_{0,e}^{1,0} \longrightarrow (\mathcal{H}_e^{1,0}/\mathcal{H}_{0,e}^{1,0})^{\vee} \otimes \Omega_S^1
\end{equation}

\noindent
Alternatively (39) can be written as

\begin{equation}
\begin{split}
\delta & =\frac{1}{2}\Big [ (h^{1,0}-h_0^{1,0})(2g-2)+\sum_i \dim({Im}\bar{N}_i) \Big ] - \\
& \qquad \qquad \qquad \qquad \qquad \Big [ (-\delta_0)+\frac{1}{2}\left ( \sum_{s \in S^*} \nu_s (\bar{\theta}_s)+\sum_{i}\nu_{s_i} (\bar{A}_i) \right ) \Big ]
\end{split}
\end{equation} 
\noindent 
where $N_i$ are the logarithms of the monodromies at the degeneracy point 
$s_i$ and since $N_i=\text{Res}_{s_i}\nabla$,

\begin{equation}
\bar{N_i}:  F_{e,s_i}^1/F_{0,e,s_i}^{1} \longrightarrow \left ( F_{e,s_i}^{1}/F_{0,e,s_i}^{1}\right )^{\vee}
\end{equation}

\noindent
are the induced map. The matrices $A_i$ are defined via the matrice of the map $\bar{\theta}_{s_i}$ by

\begin{equation}
\bar{\theta}_{s_i}= \begin{pmatrix} 
\frac{B'}{s}+... & * &*\\
0 &A'(s) & *\\
0 & 0 & 0 
\end{pmatrix}
\end{equation} 

\noindent 
where $B'$ is the nonzero component of the matrix of $\theta_{s_i}$, \cite{GGK}. 

In the weight 2 case in the absence of degeneracy, we have the two short exact sequence

\begin{align}
\begin{aligned}
0 \to \mathcal{H}_{0,e}^{2,0} \to \mathcal{H}_e^{2,0} \stackrel{\theta^{2,0}}{\longrightarrow} {\mathcal{H}}_e^{1,1} \otimes \Omega_S^1 \stackrel{\theta_1}{\rightarrow} \check{\mathcal{H}}_{0,e}^{1,1} \otimes \Omega_S^1 \to 0 \\
0 \to \mathcal{H}_{0,e}^{1,1} \to \mathcal{H}_e^{1,1} \stackrel{(\theta^{2,0})^{\vee}}{\longrightarrow} {\mathcal{H}}_e^{0,2} \otimes \Omega_S^1 \to \check{\mathcal{H}}_e^{2,0} \otimes \Omega_S^1 \to 0
\end{aligned}
\end{align}

\noindent
which are dual of one another. The equation (39) is replaced by the following

\begin{equation}
\delta^{2,0}=(h^{2,0}-h_0^{2,0})(2g-2)-[\delta(0)+\sum_{s \in S} \nu_s(\bar{\theta}^{2,0})]
\end{equation}

\noindent
where 

\begin{equation}
{\bar{\theta}^{2,0}}:\mathcal{H}^{2,0}/\mathcal{H}_{0}^{2,0} {\longrightarrow} \mathcal{H}^{1,1} \otimes \Omega_S^1/\ker \theta_1
\end{equation}

\noindent 
and $\delta(0)=a_0+b_0+b_1$. When there are degeneracies the formula modifies as follows

\begin{equation}
\delta^{2,0}=(h^{2,0}-h_0^{2,0})(2g-2+N)+[\delta(0)+ \sum_{s \in S} \nu_s(\det \theta)+\sum_i(\hat{h}_i^{2,0}-h_i^{2,0})]
\end{equation}

\noindent
where we have set $\hat{H}_i=Gr_q(LMHS)_{s_i}$ and $\hat{h}_i^{p,q}$ are their Hodge numbers, cf. \cite{GGK} loc. cit.. 

\section{Fujita decomposition}

Fujita decomposition associated to a fibration $f:X \to S$ of smooth projective schemes $X, S$ over $\mathbb{C}$ and where $\dim(S)=1, \ \dim(X)=n+1$ is a decomposition of the sheaf $f_*\omega_{X/S}$ where $\omega_{X/S}$ is the relative canonical sheaf of $f$. First denote 

\begin{equation}
f:X^* \to S^*, \qquad X^*=X \smallsetminus \text{Sing}(f)
\end{equation}

\noindent
to be the same map on the smooth locus of $S$. Then the direct image $\mathcal{H}=R^nf_*\mathbb{C}_{X^*}$ defines a local system on $S^*$. By Riemann-Hilbert correspondence it is equivalent to the Gauss-Manin connection

\begin{equation}
\nabla:\mathcal{H} \otimes \mathcal{O}_S \to \mathcal{H} \otimes \Omega_S^1
\end{equation}

\noindent
Its data can be encoded in a monodromy representation

\begin{equation}
\varrho:\pi_1(S^*) \to GL(\mathcal{H}_{s_0})
\end{equation} 

\noindent
where $s_0 \in S$. When studying this local system globally there appear two type of monodromy representations. One is the monodromy around singularities and second is the monodromy of $S$.

\begin{theorem} (Fujita, Catanese-Dettweiler) \cite{PT, Fuj1, Fuj2} There is a decomposition 

\begin{equation}
f_*\omega_{X/S}=\mathcal{A} \bigoplus \mathcal{U}, \qquad \text{(2nd Fujita decomposition)}
\end{equation}

\noindent
where $\mathcal{A}$ is an ample vector bundle on $S$, and $\mathcal{U}$ is unitary flat sheaf whose restriction to $S^*$ is a holomorphic vector bundle associated to a sub-local system of $\mathcal{H}$. 
\end{theorem}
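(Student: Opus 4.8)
\emph{Proof sketch.} The plan is to read the splitting off the curvature of the Hodge metric: the unitary flat summand will be the part of $f_*\omega_{X/S}$ on which the Kodaira--Spencer field vanishes, and the ample summand will be its Hodge--orthogonal complement. First I would identify the sheaf in question with the top Hodge bundle. Since $\dim X=n+1$ and $\dim S=1$ the fibers have dimension $n$ and $\omega_{X/S}=\Omega^n_{X/S}$, so over the smooth locus $S^*$ one has $f_*\omega_{X/S}=f_*\Omega^n_{X/S}=\mathcal{H}^{n,0}=F^n$; by the isomorphisms (33) the locally free sheaf $f_*\omega_{X/S}$ is precisely the Deligne canonical extension $\mathcal{H}_e^{n,0}$. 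Equipping it with the Hodge metric, the curvature formula (35) degenerates, because $\mathcal{H}^{n+1,-1}=0$, to $\Theta_{\mathcal{H}^{n,0}}={}^t\theta_0\wedge\bar\theta_0$ with $\theta_0=Gr_F^n\nabla:\mathcal{H}^{n,0}\to\mathcal{H}^{n-1,1}\otimes\Omega^1_S(\log E)$. This form is Griffiths semipositive and vanishes exactly on $\ker\theta_0$, which is the Hodge-theoretic incarnation of the Fujita--Kawamata semipositivity (nefness) of $f_*\omega_{X/S}$ \cite{PT, Fuj1, Fuj2}.

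Next I would isolate the flat summand. On $S^*$ the Hodge metric is flat along $\ker\theta_0$, and Griffiths transversality shows that $\nabla$ preserves $F^n$ in those directions, so the largest subbundle on which $\theta_0$ vanishes is $\nabla$-stable and flat. By Deligne's semisimplicity of $\mathcal{H}$ it corresponds to a sub-local system $\mathbb{U}\subset\mathcal{H}$ which is a sub-VHS concentrated in bidegree $(n,0)$. On such a piece the Weil operator is parallel, so the (parallel) polarization restricts to a flat Hermitian metric and $\mathbb{U}$ is unitary; its Deligne extension $\mathcal{U}$ is then locally free, meets $E$ without poles, and carries the flat unitary connection $\nabla_{\mathcal{U}}$ of (4). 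Taking $\mathcal{A}=\mathcal{U}^\perp$ for the Hodge metric and invoking semisimplicity once more, this orthogonal is a holomorphic, $\nabla$-compatible complement that extends across $E$, yielding the direct sum $f_*\omega_{X/S}=\mathcal{A}\oplus\mathcal{U}$ with $\mathcal{U}$ flat unitary by construction.

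It remains to prove that $\mathcal{A}$ is ample, which I expect to be the main obstacle, since the Hodge curvature only yields semipositivity and is strictly positive merely on the open set where $\theta_0$ is injective. Here I would pass to the numerical characterization available on the curve $S$: being a direct summand, hence a quotient, of the nef bundle $f_*\omega_{X/S}$, the bundle $\mathcal{A}$ is nef, and by Hartshorne's criterion it is ample as soon as every nonzero quotient bundle has strictly positive degree \cite{Har}. Suppose instead $\mathcal{A}$ had a quotient $\mathcal{Q}$ of degree zero; then $\mathcal{Q}$ is nef of degree zero, hence numerically flat on the curve, and the Hodge-theoretic rigidity that a numerically flat subquotient of a polarized Hodge bundle is unitary flat \cite{CaDe, CaDe2} would produce a unitary flat quotient of $\mathcal{A}$. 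By semisimplicity this would split back as a nonzero unitary flat subsheaf of $\mathcal{A}=\mathcal{U}^\perp$, contradicting the maximality of $\mathcal{U}$. Hence every quotient of $\mathcal{A}$ has positive degree and $\mathcal{A}$ is ample.

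The two points that must be made fully rigorous are: first, that the orthogonal splitting $\mathcal{A}\oplus\mathcal{U}$ is genuinely holomorphic and extends across the degeneracy divisor $E$ without acquiring poles, which is where Deligne's semisimplicity and Schmid's control of the limiting mixed Hodge structure \cite{Sch1} are indispensable; and second, the passage from mere numerical flatness of a degree-zero quotient to its unitarity, which is exactly the positivity-to-unitarity bridge underlying \cite{CaDe, CaDe2}. Once these are in place, the decomposition together with the asserted properties of $\mathcal{A}$ and $\mathcal{U}$ follows.
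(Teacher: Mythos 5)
The paper offers no proof of this statement: Theorem 4.1 is quoted from the literature (Fujita; Catanese--Dettweiler; Pirola--Torelli) with citations only, so there is no internal argument to compare yours against. Judged on its own terms, your sketch follows the right general philosophy --- identify $f_*\omega_{X/S}$ with the extended Hodge bundle $\mathcal{H}^{n,0}_e$, use the curvature formula and semipositivity, and test ampleness of the complement via Hartshorne's degree criterion on a curve --- but it contains one step that is genuinely wrong and one that is circular.

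The wrong step is the identification of the unitary flat summand with $\ker\theta_0$. The restriction of $\Theta_{\mathcal{H}^{n,0}}={}^t\theta_0\wedge\bar\theta_0$ to the subsheaf $\ker\theta_0$ does vanish, but the curvature of the \emph{induced} metric on $\ker\theta_0$ is that restriction minus the square of the second fundamental form of the inclusion $\ker\theta_0\subset\mathcal{H}^{n,0}$, hence is only seminegative, not zero; likewise Griffiths transversality only gives $\nabla(\ker\theta_0)\subset F^n\otimes\Omega^1_S$, not $\nabla(\ker\theta_0)\subset\ker\theta_0\otimes\Omega^1_S$. So $\ker\theta_0$ need be neither flat nor a holomorphic direct summand, and the paper itself records only the inclusion $\mathcal{U}\subset\ker(\theta)$ together with the \emph{inequality} $u_f\le h_0^{n,0}$ --- equality can fail, so your $\mathcal{U}$ is in general too big. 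The correct construction takes $\mathcal{U}$ to be the maximal degree-zero (equivalently, flat unitary) direct summand, which sits inside $\ker\theta_0$ but may be strictly smaller. The circular step is that the two facts you defer to \cite{CaDe, CaDe2} --- that the splitting extends holomorphically across $E$ without poles, and that a nef degree-zero quotient of the Hodge bundle is unitary flat and comes from a sub-local system of $\mathcal{H}$ --- are precisely the content of the Catanese--Dettweiler theorem being asserted; assuming them leaves the substantive part of the statement unproved. With $\mathcal{U}$ redefined as the maximal flat summand and those two inputs supplied honestly, the rest of your outline (nefness of $f_*\omega_{X/S}$, maximality of $\mathcal{U}$ forcing every quotient of $\mathcal{A}$ to have positive degree) does go through.
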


\noindent
There is also a first Fujita decomposition.

\begin{theorem}(Fujita) \cite{PT, Fuj1, Fuj2}
There exists a decomposition 

\begin{equation}
f_*\omega_{X/S}=\mathcal{E} \bigoplus \mathcal{O}_S^{q_f}, \qquad \text{(1nd Fujita decomposition)}
\end{equation}

\noindent
with $\mathcal{E}$ a vector bundle, and $q_f=h^1(\mathcal{O}_X)-g(S)=q(X)-g(S)$ is the relative irregularity of $f$.
\end{theorem}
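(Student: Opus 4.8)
The plan is to derive the first decomposition from the second one (Theorem 4.1) by isolating, inside the unitary flat summand, the subsheaf on which the monodromy acts trivially, and then to identify the rank of that subsheaf with the relative irregularity by means of Deligne's global invariant cycle theorem. Throughout I work in the relative-dimension-one situation of interest, where $f_*\omega_{X/S}$ is the $(1,0)$-Hodge bundle $\mathcal{H}^{1,0}$ of the weight-one variation attached to $R^1f_*\mathbb{C}$.

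First I would invoke Theorem 4.1 to write $f_*\omega_{X/S}=\mathcal{A}\oplus\mathcal{U}$ with $\mathcal{A}$ ample and $\mathcal{U}$ unitary flat. Since any trivial direct summand of $\mathcal{A}$ would furnish a degree-zero quotient line bundle, which is impossible for an ample bundle on the curve $S$, every trivial direct summand of $f_*\omega_{X/S}$ is contained in $\mathcal{U}$; it therefore suffices to split the trivial part off from $\mathcal{U}$. Restricting to $S^*=S\setminus E$, the sheaf $\mathcal{U}$ corresponds to a unitary representation $\rho\colon\pi_1(S^*)\to U(r)$. Because $U(r)$ is compact, $\rho$ is completely reducible; writing $(\mathbb{C}^r)^{\rho}$ for the monodromy invariants and $W$ for its orthogonal complement, which is again $\rho$-invariant by unitarity, one obtains a splitting of local systems $\mathcal{U}|_{S^*}=\underline{\mathbb{C}}^m\oplus\mathcal{W}$ with $m=\dim(\mathbb{C}^r)^{\rho}$. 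The trivial local system extends to $\mathcal{O}_S^{m}$ and $\mathcal{W}$ to a flat unitary $\mathcal{U}'$ carrying no invariants, and hence (being polystable of slope zero) no global sections at all. Setting $\mathcal{E}=\mathcal{A}\oplus\mathcal{U}'$ then yields $f_*\omega_{X/S}=\mathcal{E}\oplus\mathcal{O}_S^{m}$, so it only remains to prove $m=q_f=q(X)-g(S)$.

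To compute $m$, I would identify the space of global flat sections of $\mathcal{H}^{1,0}$ with $(1,0)$-type cohomology on $X$. The invariants $(\mathbb{C}^r)^{\rho}$ are exactly the global flat sections of $R^1f_*\mathbb{C}$ lying in the $(1,0)$-piece $f_*\omega_{X/S}$. By the global invariant cycle theorem, the monodromy invariants coincide with the image of the restriction $H^1(X,\mathbb{C})\to H^1(X_s,\mathbb{C})$, and this image is a sub-Hodge structure; its $(1,0)$-part is $\mathrm{Im}\big(H^0(X,\Omega^1_X)\to H^0(X_s,\Omega^1_{X_s})\big)$, the restrictions of global holomorphic $1$-forms, each of which restricts to a flat holomorphic $(1,0)$-section on every fiber. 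Using the relative cotangent sequence $0\to f^*\Omega^1_S\to\Omega^1_X\to\Omega^1_{X/S}\to 0$ together with $f_*\mathcal{O}_X=\mathcal{O}_S$, the kernel of this restriction is precisely $f^*H^0(S,\Omega^1_S)$, of dimension $g(S)$. Hence $m=h^0(X,\Omega^1_X)-g(S)=q(X)-g(S)=q_f$, which finishes the argument.

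The step I expect to be the main obstacle is the compatibility between the purely topological splitting of $\mathcal{U}$ by its monodromy and the holomorphic/Hodge-theoretic structure: one must verify that the invariant local subsystem underlies a constant sub-variation of Hodge structure, so that its $(1,0)$-piece is a genuine holomorphic trivial summand of $f_*\omega_{X/S}$ rather than only a flat $C^{\infty}$-subbundle, and that this splitting extends correctly across the degeneration locus $E$ in the Deligne canonical extension. A secondary delicate point is the application of the invariant cycle theorem in the open logarithmic setting, where one must control the local monodromies at the punctures so that the generic rank of the invariant part is exactly $q(X)-g(S)$.
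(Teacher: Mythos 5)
The paper offers no proof of this statement: Theorem 4.2 is quoted from Fujita via \cite{PT, Fuj1, Fuj2}, so there is no internal argument to compare yours against line by line. Your derivation is nevertheless essentially correct and follows the modern route of Catanese--Dettweiler and Pirola--Torelli: take the second decomposition $f_*\omega_{X/S}=\mathcal{A}\oplus\mathcal{U}$ as given, split $\mathcal{U}=\mathcal{O}_S^{m}\oplus\mathcal{U}'$ by complete reducibility of the unitary monodromy, and compute $m$ from the global invariant cycle theorem together with the relative cotangent sequence and $f_*\mathcal{O}_X=\mathcal{O}_S$. Beyond the two points you already flag (extension of the splitting across $E$, and control of local monodromies), one step deserves to be made explicit: to identify $m$ with $\dim F^1\bigl(H^1(X_s,\mathbb{C})^{\pi_1(S^*)}\bigr)$ you must exclude the possibility that an invariant flat section of $R^1f_*\mathbb{C}$ lying in $F^1$ has a nonzero component in $\mathcal{A}$; this uses the theorem of the fixed part (the invariants form a constant sub-variation, so their $(1,0)$-piece is a flat subbundle of $f_*\omega_{X/S}$) together with the maximality of $\mathcal{U}$ among flat subbundles in the construction of the second decomposition. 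For contrast, Fujita's original argument does not pass through the second decomposition (which historically came later): relative duality gives $R^1f_*\mathcal{O}_X\cong(f_*\omega_{X/S})^{\vee}$, the Leray spectral sequence gives $q_f=q(X)-g(S)=h^0\bigl(S,(f_*\omega_{X/S})^{\vee}\bigr)$, and his semipositivity theorem for $f_*\omega_{X/S}$ forces the resulting trivial quotient to split off. Your approach buys the Hodge-theoretic description of the trivial summand as the $(1,0)$-part of the invariant cycles, which is what the rest of this paper actually exploits; Fujita's is more elementary and independent of Theorem 4.1.
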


\noindent
The embedding $\mathcal{O}_S^{q_f} \hookrightarrow \mathcal{U}$ splits as

\begin{equation}
\mathcal{U}=\mathcal{O}_S^{q_f} \bigoplus \mathcal{U}'
\end{equation}

\noindent
where $\mathcal{U}'$ is also flat but has no sections. The decomposition (52) is compatible with that in (51), that we have

\begin{equation}
f_*\omega_{X/S}=\mathcal{A} \bigoplus \mathcal{U}' \bigoplus \mathcal{O}_S^{q_f} 
\end{equation}

\noindent
The Gauss-Manin connection on $\mathcal{H}$ induces a connection 

\begin{equation}
\nabla_{\mathcal{U}}: \mathcal{U} \to \mathcal{U} \otimes \Omega_S^1
\end{equation}

\noindent
We denote the corresponding local system by $\mathbb{U}$. We have $\mathcal{U}=\mathbb{U} \otimes \mathcal{O}_S$. The data of the local system is equivalent to a unitary representation

\begin{equation}
\tau:\pi_1(S) \to U(q_f,\mathbb{C})
\end{equation} 

\noindent
That is $\mathbb{U}$ corresponds to the global monodromy of $S$. Fujita decompositions can also be stated over a higher dimensional base, \cite{CK}. It is known by construction that $\mathcal{U} \subset \ker(\theta)$. It follows that

\begin{equation}
u_f:=\text{rank}(\mathcal{U}) \leq h_0^{n,0}
\end{equation}

\noindent 
Assume $f:X \to S$ is a fibration of $n$-folds over a smooth curve $S$ and $\mathcal{H}=R^nf_*\mathbb{C}=\bigoplus_{p+q=n} \mathcal{H}^{p,q}$ is a variation of polarized complex Hodge structure of weight $n$ with unipotent monodromy over $S$. Let  
\begin{equation}
\kappa^{n,0} :\mathcal{H}^{n,0} \stackrel{\theta^n}{\rightarrow} \mathcal{H}^{n-1,1} \stackrel{\theta^{n-1}}{\rightarrow} ... \stackrel{\theta}{\rightarrow} \mathcal{H}^{0,n}
\end{equation} 
be the Griffiths-Yukawa coupling, i.e the composition of the $n$ successive KS-maps. 
The Griffiths-Yukawa coupling is said to be maximal, if $\mathcal{H}^{n,0} \ne 0$ and if $\kappa^{n,0}$ is an isomorphism. The coupling is said to be strictly maximal if the same conditions hold but all the map stages $\theta^i$ in (56) are isomorphisms, [cf. \cite{VZ4}]. Similar to before we set $\mathcal{H}_0^{n,0}=\ker \left ( \theta^p:\mathcal{H}^{p,q} \to \mathcal{H}^{p-1,q+1} \otimes \Omega_S^1(\log E) \right )$ and $h_0^{n,0}=\text{rank} \mathcal{H}_0^{n,0}$, with the same notation as previous section. 

\begin{theorem} [\cite{VZ4} proposition 1.1]
We have the Arakelov inequality
\begin{equation}
\deg(\mathcal{H}^{n,0}) \leq \frac{n}{2}\ . \text{rank}(\mathcal{H}^{n,0}). \deg(\Omega_S^1(\log E))
\end{equation}
with equality if the Higgs field of $\mathcal{H}$ is maximal. Moreover if $\theta^n \ne 0$ then $\deg(\mathcal{H}^{n,0}) >0 , \ \deg(\Omega_S^1(\log E)) >0$. In particular if $S=\mathbb{P}^1$ then $\sharp E \geq 3$. 

Assume $A^{n,0}$ be a subsheaf of $\mathcal{H}^{n,0}$ such that we have the equality above when $\mathcal{H}^{n,0}$ is replaced by $A^{n,0}$. Then we have a decomposition $\mathcal{H} =\mathcal{A} \bigoplus \mathcal{U}$ and subsheaves $A^{p,n-p}$ of $\mathcal{H}^{p,n-p}$ such that the Higgs bundle of $\mathcal{A}$ is $(\bigoplus A^{p,q}, \bigoplus \tau^p)$ and the Higgs field $\tau$ is strictly maximal. 
\end{theorem}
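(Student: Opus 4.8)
The plan is to realize $(\mathcal{H},\theta)=\big(\bigoplus_{p+q=n}\mathcal{H}^{p,q},\,\theta\big)$ as the logarithmic Higgs bundle of the polarized variation $R^nf_*\mathbb{C}$ and to run the slope estimate of Simpson--Viehweg--Zuo on it. Write $L:=\Omega_S^1(\log E)$ and $d:=\deg L=2g(S)-2+\#E$. First I would record that, since the residues of $\nabla_e$ along $E$ are nilpotent, the attached parabolic structure is trivial and the polarization makes the underlying local system semisimple; by Simpson's correspondence the logarithmic Higgs bundle is then poly-stable of slope $0$. Consequently every saturated $\theta$-invariant subsheaf $\mathcal{F}\subseteq\mathcal{H}$ satisfies $\deg\mathcal{F}\le 0$, and $\deg\mathcal{F}=0$ forces $\mathcal{F}$ to be a direct summand. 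This single input will drive both the inequality and the splitting in the final clause.

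Next I would construct the sub-Higgs-sheaf generated by the top piece: set $A^{n,0}:=\mathcal{H}^{n,0}$ (or the prescribed $A^{n,0}$ in the last clause) and let $A^{n-k-1,k+1}$ be the saturation inside $\mathcal{H}^{n-k-1,k+1}$ of the $L$-untwisted image of $\theta\colon A^{n-k,k}\to\mathcal{H}^{n-k-1,k+1}\otimes L$. Then $\mathcal{A}:=\bigoplus_k A^{n-k,k}$ is $\theta$-stable and each stage $\bar\theta_k\colon A^{n-k,k}\to A^{n-k-1,k+1}\otimes L$ is generically surjective. In the strictly maximal case these stages are isomorphisms of sheaves of one common rank $r=\mathrm{rank}\,\mathcal{H}^{n,0}$, so $\deg A^{n-k,k}=\deg A^{n-k-1,k+1}+r\,d$ for each $k$; telescoping over $k=0,\dots,n$ and invoking the polarization self-duality $\deg A^{0,n}=-\deg A^{n,0}$ yields $2\deg\mathcal{H}^{n,0}=n\,r\,d$, which is the asserted equality. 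For the inequality in general I would run the same telescope with $\bar\theta_k$ only generically surjective, pass to the torsion-free images and the generic kernels, and feed the result into $\deg\mathcal{A}\le 0$; the curvature identity $\Theta_{\mathcal{H}^{p,q}}={}^t\theta_q\wedge\bar\theta_q+\bar\theta_{q-1}\wedge{}^t\theta_{q-1}$ of Section 3 supplies the same degree accounting analytically and is a useful cross-check.

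For the positivity clause, if $\theta^n\neq 0$ then the coupling reaches $\mathcal{H}^{0,n}$, so $\mathcal{A}$ has full length $n$ and its top stage is nonzero; since $A^{n,0}\subseteq\mathcal{H}^{n,0}=f_*\omega_{X/S}$ is semipositive one has $\deg A^{n,0}\ge 0$, and the equality $2\deg A^{n,0}=n\,\mathrm{rank}(A^{n,0})\,d$ on this strictly maximal summand is incompatible with $d\le 0$ unless the whole field vanished. Hence $d=\deg\Omega_S^1(\log E)>0$ and $\deg\mathcal{H}^{n,0}\ge\deg A^{n,0}>0$; specializing to $S=\mathbb{P}^1$ gives $-2+\#E>0$, i.e. $\#E\ge 3$. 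For the final splitting assertion, equality for $A^{n,0}$ makes every inequality in the telescope an equality, so $\deg\mathcal{A}=0$; poly-stability of slope $0$ then forces $\mathcal{A}=\big(\bigoplus A^{p,q},\bigoplus\tau^p\big)$ to be a direct summand $\mathcal{H}=\mathcal{A}\oplus\mathcal{U}$, its Higgs field $\tau$ being strictly maximal because each stage is now an isomorphism, while the complement $\mathcal{U}$ has vanishing Higgs field and is therefore the flat unitary part, with $\mathcal{U}^{n,0}=0$.

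I expect the main obstacle to be the degree bookkeeping in the non-maximal telescope: where the iterated Higgs field drops rank one must control the generic kernels so that the accumulated correction terms stay nonnegative and the clean factor $n/2$ survives rather than degrading into boundary error. This is exactly the point at which the nilpotency of the residues (equivalently, the vanishing of the parabolic degrees) is indispensable, and it is also what ties the algebraic slope computation to the analytic curvature estimate of Section 3.
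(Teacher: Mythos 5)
First, a point of comparison: the paper does not prove this statement at all --- it is quoted as Proposition 1.1 of Viehweg--Zuo \cite{VZ4}, so there is no in-paper argument to measure yours against. Your outline (Simpson's correspondence giving a polystable logarithmic Higgs bundle of slope zero because the residues are nilpotent, the sub-Higgs sheaf generated by $\mathcal{H}^{n,0}$, telescoping degrees along the stages of $\theta$, and self-duality coming from the polarization) is in substance the strategy of the cited source, so the route you chose is the standard and correct one.

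There are, however, two concrete gaps. First, in the positivity clause you claim $\deg A^{n,0}\geq 0$ ``since $A^{n,0}\subseteq\mathcal{H}^{n,0}=f_*\omega_{X/S}$ is semipositive.'' Semipositivity (nefness) controls quotients, not subsheaves: a subsheaf of a nef bundle on a curve can have negative degree. You then apply the identity $2\deg A^{n,0}=n\,\mathrm{rank}(A^{n,0})\,d$ to the sub-Higgs sheaf generated by $\mathcal{H}^{n,0}$ as though it were strictly maximal, but $\theta^n\neq 0$ only makes each stage nonzero, not an isomorphism, so that identity is not available. The correct source of positivity is the curvature identity for $\Theta_{\mathcal{H}^{p,q}}$ recalled in Section 3: for $(p,q)=(n,0)$ only the term ${}^t\theta_0\wedge\bar\theta_0$ survives, its trace is pointwise semi-positive, hence $\deg\mathcal{H}^{n,0}\geq 0$ with strict inequality as soon as $\theta|_{\mathcal{H}^{n,0}}\neq 0$; positivity of $\deg\Omega_S^1(\log E)$ then follows from the already-established inequality. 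Second, the main inequality with the sharp constant $n/2$ is only derived in the strictly maximal case; in the general case you explicitly defer ``the degree bookkeeping in the non-maximal telescope,'' which is precisely where the constant is earned. Generic surjectivity of $\bar\theta_k\colon A^{n-k,k}\to A^{n-k-1,k+1}\otimes L$ yields only one-sided slope estimates, and one must play the estimate for the sub-Higgs sheaf generated by $\mathcal{H}^{n,0}$ against its polar dual to average the accumulated twists $k\,d$ down to $\tfrac{n}{2}d$. Until that step is written out, the central inequality is asserted rather than proved.
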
 
\noindent 
The Fujita decomposition $f_*\omega_{X/S}=\mathcal{A} \bigoplus \mathcal{U}$ into an ample sheaf $\mathcal{A}$ and a flat subsheaf $\mathcal{U}$, satisfies 
\begin{equation}
\deg(\mathcal{H}^{n,0})=\deg(\mathcal{A})
\end{equation}  
Moreover the Higgs field of $\mathcal{A}$ is strictly maximal and $\mathcal{U}$ is a variation of polarized complex Hodge structure zero in the bidegree $(n,0)$, [see \cite{VZ4} lemma 4, for instance]. 
 
\section{Family of Surfaces fibered by Curves}

We consider a system of fibration of surfaces over families of curves as 

\begin{equation}
\xymatrix{
  & X \ar[dl]_{h} \ar[dr]^{f}
  \ar@{}[d]|-{\circlearrowleft} \\
  Y \ar[rr]_{g}
  && S}
\end{equation}

\noindent
One recovers the Leray-Serre spectral sequence from above. Apply the Grothendieck spectral sequence to the following spectral sequence 

\begin{equation}
R^pg_*(R^qh_*\mathcal{F}) \Rightarrow R^nf_*\mathcal{F}
\end{equation}

\noindent
where $\mathcal{F} \in Coh(X)$. It is obviously convergent. One obtains 

\begin{equation}
0 \to R^1g_*\mathcal{O}_Y  \to R^1f_*\mathcal{O}_X \to g_*R^1h_*\mathcal{O}_X  \to R^2g_*\mathcal{O}_Y  \to R^2f_*\mathcal{O}_X 
\end{equation}

\noindent
Applying $\otimes \Omega_S^1$ we get the following

\begin{multline}
0 \to R^1g_*\mathcal{O}_Y \otimes \Omega_S^1 \to R^1f_*\mathcal{O}_X \otimes \Omega_S^1 \to g_*R^1h_*\mathcal{O}_X \otimes \Omega_S^1\\  \to R^2g_*\mathcal{O}_Y \otimes \Omega_S^1 \to R^2f_*\mathcal{O}_X \otimes \Omega_S^1
\end{multline}

\noindent
This sequence fits with the long exact cohomology sequence of 

\begin{equation}
0 \to g_*\Omega_{Y/S}^1 \to  f_*\Omega_{X/S}^1 \to f_*\Omega_{X/Y} \to 0 
\end{equation}

\noindent
and gives

\begin{proposition}
In the absence of singularities (degenerations) we have a commutative diagram 

\begin{center}
$\begin{CD}
@. \Omega_{S}^1 @.  \Omega_{S}^1 @. \Omega_{Y}^1 @.  \\
@.  @VVV    @VVV  @VVV \\
@. g_*\Omega_{Y}^1 @.  f_*\Omega_{X}^1 @. f_*\Omega_{X}^1 @.  \\
@.  @VVV    @VVV  @VVV \\
0 @>>> g_*\Omega_{Y/S}^1 @>>>  f_*\Omega_{X/S}^1 @>>> f_*\omega_{X/Y} @>>> R^1g_*\Omega_{Y/S}^1 \\
@.  @VV{\theta_Y}V    @VV{\theta_X}V  @VVg_*{\theta_{X/Y}}V \\
0 @>>> R^1g_*\mathcal{O}_Y \otimes \Omega_S^1 @>>> R^1f_*\mathcal{O}_X \otimes \Omega_S^1 @>>> g_*R^1h_*\mathcal{O}_X \otimes \Omega_S^1  @>>> R^2g_*\mathcal{O}_Y\\
@.  @VVV    @VVV  @VVV  
\end{CD}$
\end{center}

\end{proposition}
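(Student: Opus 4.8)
The plan is to assemble the diagram from three parallel copies of the standard conormal and relative-differential sequences, one attached to each edge of the triangle (8), and then to read off commutativity from the naturality of connecting homomorphisms. First I would record, for each of the three fibrations, the defining short exact sequence of relative one-forms: for $f$ this is $0 \to f^*\Omega_S^1 \to \Omega_X^1 \to \Omega_{X/S}^1 \to 0$, for $g$ the analogue $0 \to g^*\Omega_S^1 \to \Omega_Y^1 \to \Omega_{Y/S}^1 \to 0$, and for $h$ the sequence $0 \to h^*\Omega_Y^1 \to \Omega_X^1 \to \Omega_{X/Y}^1 \to 0$. These are tied together by the composite sequence (20) with $q=1$, namely $0 \to h^*\Omega_{Y/S}^1 \to \Omega_{X/S}^1 \to \Omega_{X/Y}^1 \to 0$. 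Pushing this forward by $f_* = g_*h_*$ and using the adjunction (26) together with $h_*\mathcal{O}_X = \mathcal{O}_Y$ to identify $f_*h^*\Omega_{Y/S}^1$ with $g_*\Omega_{Y/S}^1$ produces the middle horizontal row of the target diagram, together with its connecting arrow into $R^1g_*\Omega_{Y/S}^1$.

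Next I would identify each vertical arrow as a Kodaira-Spencer map, exactly as in (25). Applying $f_*$ to $0 \to f^*\Omega_S^1 \to \Omega_X^1 \to \Omega_{X/S}^1 \to 0$ yields the connecting homomorphism $\theta_X : f_*\Omega_{X/S}^1 \to R^1f_*(f^*\Omega_S^1) \cong R^1f_*\mathcal{O}_X \otimes \Omega_S^1$, where the last isomorphism is the projection formula (26). The same recipe applied to $g$ gives $\theta_Y$, and pushing forward by $g_*$ the long exact sequence coming from $h$ gives $g_*\theta_{X/Y}$; these are the three vertical maps. The uniform twist by $\Omega_S^1$ on the bottom row is precisely the twist produced by the projection formula, and the passage from $R^1h_*\mathcal{O}_X$ to $g_*R^1h_*\mathcal{O}_X$, together with the edge map into $R^2g_*\mathcal{O}_Y$, is exactly what the $\Omega_S^1$-twisted $5$-term sequence (65), obtained from (64), records.

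The heart of the argument is commutativity of the squares, and here I would proceed by naturality. The triangle (8) furnishes a morphism of short exact sequences from $0 \to f^*\Omega_S^1 \to \Omega_X^1 \to \Omega_{X/S}^1 \to 0$ to $0 \to h^*\Omega_Y^1 \to \Omega_X^1 \to \Omega_{X/Y}^1 \to 0$, the identity in the middle and covered on the left by the pullback of $g^*\Omega_S^1 \to \Omega_Y^1$. Applying $f_* = g_*h_*$ and invoking the naturality of the connecting homomorphism in the resulting ladder forces $g_*\theta_{X/Y}$ to be compatible with $\theta_X$ along the horizontal maps $f_*\Omega_{X/S}^1 \to f_*\Omega_{X/Y}^1$ and $R^1f_*\mathcal{O}_X \to g_*R^1h_*\mathcal{O}_X$; the left-hand square involving $\theta_Y$ is handled identically using the inclusion $g_*\Omega_{Y/S}^1 \hookrightarrow f_*\Omega_{X/S}^1$. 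Conceptually all three connecting maps are the $d_1$ differential of a single filtered complex, obtained by filtering $\Omega_X^\bullet$ first by $f^*\Omega_S^\bullet \wedge \Omega_X^{\bullet-1}$ and then refining by $h^*\Omega_{Y/S}^\bullet$, so the squares commute because $d_1$ respects the filtration by construction.

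The main obstacle I anticipate is bookkeeping rather than structure: one must keep the two twists $\otimes\Omega_S^1$ and $\otimes\Omega_Y^1$ straight and reconcile them through the subsheaf $g^*\Omega_S^1 \hookrightarrow \Omega_Y^1$, since the natural target of the Kodaira-Spencer map for $h$ carries $\Omega_Y^1$ whereas the $5$-term row carries $\Omega_S^1$; the identification $g_*\bigl(R^1h_*\mathcal{O}_X \otimes g^*\Omega_S^1\bigr) \cong g_*R^1h_*\mathcal{O}_X \otimes \Omega_S^1$ via the projection formula is what makes the third column close. One must also ensure that the Leray spectral sequence for $f = g\circ h$ degenerates in the relevant range, so that $f_*\Omega_{X/Y}^1$, $g_*R^1h_*\mathcal{O}_X$ and the edge maps genuinely fit the $5$-term sequence; this degeneration is exactly what the hypothesis of absence of singularities supplies. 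Once it is in place, verifying that each connecting map is the honest Gauss-Manin Kodaira-Spencer map, and not off by a sign or a unit, is routine but must be done with care.
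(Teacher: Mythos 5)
Your outline covers the routine half of the proposition --- the assembly of the rows from the conormal sequences, the identification of the vertical arrows with Kodaira--Spencer connecting maps, and commutativity via naturality of $\delta$-functors --- and in fact treats those points more carefully than the paper does (the paper dismisses them as ``a classical known fact'' and only notes that the bottom row is short exact because $R^2g_*\mathcal{O}_Y=0$ for a relative curve). But you have missed the one point the paper's proof is actually devoted to: the third entry of the middle row in the statement is $f_*\omega_{X/Y}$, the relative dualizing sheaf, not $f_*\Omega_{X/Y}^1$. Your argument runs the smooth conormal sequence $0\to h^*\Omega_Y^1\to\Omega_X^1\to\Omega_{X/Y}^1\to 0$ for $h$ and therefore silently assumes $h$ is smooth. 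The paper points out that this is exactly what fails: even when $f:X\to S$ and $g:Y\to S$ have no singular fibers, the maps $X_s\to Y_s$ may still degenerate, so the correct sequence is
\begin{equation*}
0 \to g_*\Omega_{Y/S}^1 \to f_*\Omega_{X/S}^1 \to f_*\Omega_{X/Y}^1(\log D) \to R^1g_*\Omega_{Y/S}^1,
\end{equation*}
with $D$ the (normal crossing) degeneracy locus of $h$. The identification with $\omega_{X/Y}$ then uses that $h$ has one-dimensional fibers, so $\Omega_{X/Y}^1(\log D)$ is a line bundle and equals its own determinant, whence
\begin{equation*}
\Omega_{X/Y}^1(\log D)=\det\bigl(\Omega_X^1(\log D)\bigr)\otimes\det\bigl(h^*\Omega_Y^1(\log F)\bigr)^{\vee}=\omega_{X/Y},
\end{equation*}
and on stalks $H^0(X_s,\omega_{X/Y}|_{X_s})=H^0(Y_s,h_*\omega_{X_s/Y_s})$ recovers the $H^0(W_t,\Omega_{W_t}^1)$ of the relative fibers. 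Without this step your construction produces a different sheaf in the third column and the diagram in the statement is not the one you have built; this is a genuine gap, not bookkeeping.

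A secondary, smaller point: you say the absence of singularities is needed so that ``the Leray spectral sequence for $f=g\circ h$ degenerates in the relevant range'' to get the five-term sequence. The five-term exact sequence of low-degree terms exists for any first-quadrant spectral sequence (the paper's Proposition 2.1) and requires no degeneration; what the smoothness hypotheses actually buy is the absence of log poles in the $f$- and $g$-directions and the vanishing $R^2g_*\mathcal{O}_Y=0$ that makes the bottom row short exact on the left.
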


\begin{proof} We have the short exact sequence downstairs because $R^2g_*\mathcal{O}_Y=0$ (Kodaira vanishing). The fact that the connecting homomorphism in the vertical directions is the Kodaira-Spencer map is a classical known fact. The only thing that remains to be proved is that
the third term in the horizontal row upstairs is well defined and correct. First note that in our case assuming that no singularity may appear in the fibers of $X \to S$ nor $Y \to S$, it is not guaranteed that the fibers of $X_s \to Y_s$ are all non singular. In general the  exact sequence must be written in the form

\begin{equation}
0 \to g_*\Omega_{Y/S}^1 \to  f_*\Omega_{X/S}^1 \to f_*\Omega_{X/Y}^1(\log D) \to R^1g_*\Omega_{Y/S}^1 \\
\end{equation}

\noindent
where $D$ is normal crossing. However $\Omega_{X/Y}^1(\log D)$ is a line bundle, i.e of rank one. Therefore

\begin{equation}
\begin{aligned}
\Omega_{X/Y}^1(\log D) &= \det (\Omega_{X/Y}^1(\log D))\\ 
&= \det (\Omega_{X}^1(\log D)) \otimes \det (h^*\Omega_{Y}^1(\log F))^{\vee}\\   & =\omega_{X/Y}
\end{aligned}
\end{equation}

\noindent
The stalks of the horizontal row upstairs are

\begin{equation}
H^0(Y_s, \Omega_{Y_s}^1) \to H^0(X_s, \Omega_{X_s}^1) \to H^0(X_s, \omega_{X/Y}|_{X_s})
\end{equation}

\noindent
For the last term we have 

\begin{equation}
H^0(X_s, \omega_{X/Y}|_{X_s})=H^0(Y_s, h_*\omega_{X_s/Y_s})=H^0(W_t,\Omega_{W_t}^1)
\end{equation}

\noindent
where $W_t$ are relative fibers in $X_s \to Y_s$. The proposition follows. 
\end{proof}

\vspace{0.3cm}

\begin{theorem}
If we have degeneracies then the aforementioned digram modifies as 

\begin{center}
$\begin{CD}
 g_*\Omega_{Y}^1 @.  f_*\Omega_{X}^1 @. f_*\Omega_{X}^1 @.  \\
  @VVV    @VVV  @VVV \\
 g_*\Omega_{Y/S}^1(\log F) @>>>  f_*\Omega_{X/S}^1(\log G) @>>> f_*\omega_{X/Y} (\log D) @>>>  \\
  @VV{\theta_Y}V    @VV{\theta_X}V  @VVg_*\theta_{X/Y}V \\
 R^1g_*\mathcal{O}_Y \otimes \Omega_S^1(\log E) @>>> R^1f_*\mathcal{O}_X \otimes \Omega_S^1(\log E) @>>> g_*R^1h_*\mathcal{O}_X \otimes \Omega_S^1 (\log E) @>>> \\
  @VVV    @VVV  @VVV  
\end{CD}$
\end{center}

\vspace{0.3cm}

\noindent
where $D$ is a union of curves or points.
\end{theorem}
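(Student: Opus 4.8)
The plan is to realize the logarithmic diagram as the image under $f_*$ and $g_*$ of the two defining exact sequences attached to the compatible log-smooth triple $(X,D)\stackrel{h}{\longrightarrow}(Y,F)\stackrel{g}{\longrightarrow}(S,E)$ of Section 2, and then to identify the vertical connecting maps with the logarithmic Kodaira--Spencer maps exactly as in the proof of the preceding Proposition, now with every relative differential replaced by its logarithmic counterpart. Thus the whole argument is the log-analogue of the smooth case, and the essential new input is only the legitimacy of the log sheaves and their exact sequences in the presence of degeneracy.

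First I would push forward the logarithmic relative cotangent sequence (23) by $f_*$. Its long exact sequence supplies the top horizontal row
\[
g_*\Omega_{Y/S}^1(\log F) \to f_*\Omega_{X/S}^1(\log G) \to f_*\Omega_{X/Y}^1(\log D) \to R^1g_*\Omega_{Y/S}^1(\log F),
\]
which is the log version of (66), i.e.\ the pushforward sequence (24). As in the determinant computation (67), but now in the log category, the relative dimension of $h$ equals one, so $\Omega_{X/Y}^1(\log D)$ is a line bundle and equals $\omega_{X/Y}(\log D)$ through $\det\Omega_X^1(\log D)\otimes\det(h^*\Omega_Y^1(\log F))^{\vee}$; this justifies the third entry $f_*\omega_{X/Y}(\log D)$ of the row.

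Next I would produce the bottom row from the logarithmic Leray/Grothendieck spectral sequence $R^pg_*(R^qh_*\mathcal{O}_X)\Rightarrow R^{p+q}f_*\mathcal{O}_X$. Feeding it into the Grothendieck 5-term sequence (28) and tensoring throughout by $\Omega_S^1(\log E)$, using the projection formula (26) to place the twist, gives
\[
R^1g_*\mathcal{O}_Y\otimes\Omega_S^1(\log E)\to R^1f_*\mathcal{O}_X\otimes\Omega_S^1(\log E)\to g_*R^1h_*\mathcal{O}_X\otimes\Omega_S^1(\log E)\to R^2g_*\mathcal{O}_Y\otimes\Omega_S^1(\log E).
\]
The vertical arrows are then the logarithmic Kodaira--Spencer maps: by (31) each $R^pf_*\Omega^\bullet_{X/S}(\log D)$ carries the Deligne extension $\nabla_e$ with nilpotent residue, and its graded piece $Gr_F\nabla_e$ is exactly the connecting homomorphism of the log relative cotangent sequence (the log analogue of (25)). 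Commutativity of the two squares then follows from naturality of these connecting maps with respect to the morphism of short exact sequences induced by $h$, together with the compatibility of the divisors $E,F,G,D$ built into the triple.

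The main obstacle I expect is twofold. First, with degeneracies one can no longer invoke Kodaira vanishing $R^2g_*\mathcal{O}_Y=0$ which, in the smooth case, truncated the lower row to a short exact sequence; hence the diagram must be stated with the residual $R^2$-terms (the open arrows of the statement), and one must check that the log sheaves stay locally free of the expected ranks --- precisely where the hypothesis that the singular fibers of $f,g,h$ lie one above another, making $E,F,G,D$ genuinely compatible normal-crossing data, is indispensable. Second, the identifications used above, in particular $\Omega_{X/Y}^1(\log D)\cong\omega_{X/Y}(\log D)$ and the relations of the form $G=f^{-1}(E)$ and $D=h^{-1}(F)$, are only available after a Hironaka resolution that simultaneously log-smooths the entire triple; verifying that one resolution regularizes all three fibrations at once, and that the connecting homomorphisms of the two rows coincide with the Gauss--Manin residues, is the technical heart of the proof.
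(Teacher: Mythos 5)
Your overall strategy---push forward the logarithmic relative cotangent sequence for the compatible triple, build the bottom row from the Grothendieck five-term sequence tensored with $\Omega_S^1(\log E)$, and identify the vertical arrows with the graded pieces of the Deligne-extended Gauss--Manin connection---is the same log-analogue-of-Proposition-5.1 route the paper takes, and your remarks about the loss of the Kodaira vanishing truncation and the need for a simultaneous log resolution of the whole triple are accurate and in fact more explicit than what the paper writes down.

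However, there is one genuine gap: you never address the final clause of the statement, ``where $D$ is a union of curves or points,'' and this is essentially the only assertion the paper's proof argues in any detail. The paper defines $D=\bigcup_{t\in Y}W_t$ over the points $t$ with $h^{-1}(t)$ singular, then works in local coordinates $x=(v,u,s)\mapsto y=(u,s)\mapsto s$ and observes that a point of a fiber $W_t$ is singular exactly where the Jacobian matrix of $h$ drops rank; since this is a closed (determinantal) condition, $D$ is closed in $X$ and its image in the surface $Y$ is a proper closed subvariety, hence a union of curves and isolated points. Without some such argument your proof does not justify the description of $D$ that appears in the theorem, nor the implicit claim that the degeneracy locus of $h$ can sit in the diagram as honest normal-crossing log data even when the fibrations $X\to S$ and $Y\to S$ are themselves smooth. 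You should add the rank-drop analysis of $Dh(p)$ (or an equivalent closedness argument for the critical locus of $h$) to close this.
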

\begin{proof}
We set the singular locus to be $E, F, G$ in $S, Y, X$ respectively and assume we have sufficiently blown up that all being normal crossing and no multiplicities. We shall assume all the monodromies being unipotent.

\begin{equation}
\begin{CD}
G @>>> F @>>> E\\
@VVV @VVV @VVV\\
X @>>> Y @>>> S
\end{CD}
\end{equation}

\noindent
We have the following exact sequence of logarithmic sheaves

\begin{equation}
0 \to g_*\Omega_{Y/S}^1(\log F) \to  f_*\Omega_{X/S}^1(\log G) \to f_*\Omega_{X/Y}^1(\log D) \to R^1g_*\Omega_{Y/S}^1(\log F)\\
\end{equation}

\noindent
where 

\begin{equation} 
D=\bigcup_{t \in Y}W_t \qquad (h^{-1}(t)\ \text{is Singular} )
\end{equation}

\noindent  
Lets for simplicity consider the case where no degeneracies appear in the fibrations $X \to S$ and $Y \to S$. Working locally over $s \in S$, we can choose coordinates as 

\vspace{0.2cm}

\begin{center}
$x=(v,u,s) \mapsto y=(u,s) \mapsto s$ 
\end{center}

\vspace{0.2cm}

\noindent
in the fibrations. Then $p \in W_t$ is singular if and only if the matrix

\begin{equation}
Dh(p)= \left( 
\begin{array}{cccc}
\partial h_s/\partial x(p)  & \partial h_s/\partial u(p) & \partial h_s/\partial s(p) \\
 0 & 0  &  1 
\end{array} \right)
\end{equation}

\noindent
drops rank, i.e. has rank $\leq 1$. Thus $D$ is closed in $X$. Also its image in $Y$ is a closed proper subvariety, i.e. a union of curves and isolated points. 
\end{proof}

\begin{lemma} [cf. \cite{GG2} page 286]
There is a decomposition 

\begin{equation}
\mathcal{H}:=f_*\omega_{X/S}=\mathcal{H}_{\text{fix}} \bigoplus \mathcal{H}_{\text{var}}
\end{equation}

\noindent
into fixed and variable parts where  

\begin{align}
\begin{aligned}
\mathcal{H}_{\text{var}}:&=\{image(f_*\Omega_{Y/S}^1 \to f_*\omega_{X/S})\}^{\perp}\\
\mathcal{H}_{\text{fix}}:&=\{image(f_*\Omega_{Y/S}^1 \to f_*\omega_{X/S})\}
\end{aligned}
\end{align}
\end{lemma}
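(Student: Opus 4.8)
The plan is to realize $\mathcal{H}_{\mathrm{fix}}$ as a flat sub-variation of Hodge structure of $\mathcal{H}$ and then to define $\mathcal{H}_{\mathrm{var}}$ as its orthogonal complement for the polarization, so that the asserted splitting is one of holomorphic sub-bundles (indeed of sub-VHS) rather than merely a $C^{\infty}$ one. First I would give the defining map a precise meaning. The arrow $g_{*}\Omega^{1}_{Y/S}\to \mathcal{H}$ is the leftmost map of the horizontal exact sequence of the preceding Proposition (and of its logarithmic version in the degenerate case), which over a general $s\in S^{*}$ is the pullback $h_{s}^{*}\colon H^{0}(Y_{s},\Omega^{1}_{Y_{s}})\to H^{0}(X_{s},\Omega^{1}_{X_{s}})$ along the intermediate morphism $h\colon X\to Y$. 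Since that sequence begins with $0\to g_{*}\Omega^{1}_{Y/S}\to\cdots$, the map is injective; hence $\mathcal{H}_{\mathrm{fix}}:=\operatorname{image}(g_{*}\Omega^{1}_{Y/S}\to\mathcal{H})$ is a coherent subsheaf isomorphic to $g_{*}\Omega^{1}_{Y/S}$, and after saturation it is a holomorphic sub-bundle over $S^{*}$.

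Next I would argue that $\mathcal{H}_{\mathrm{fix}}$ underlies a sub-VHS. The fibrewise pullback $h_{s}^{*}$ assembles into a morphism of polarized $\mathbb{Q}$-variations of Hodge structure $R^{1}g_{*}\mathbb{Q}_{Y^{*}}\to R^{1}f_{*}\mathbb{Q}_{X^{*}}$ over $S^{*}$; its image $\mathbb{V}_{\mathrm{fix}}$ is a sub-VHS, and by strictness of morphisms of Hodge structures the $(1,0)$-piece of $\mathbb{V}_{\mathrm{fix}}$ is exactly $\mathcal{H}_{\mathrm{fix}}$. Because a polarized VHS is semisimple in the category of VHS (Deligne's theorem of the fixed part), the polarization form restricts to a non-degenerate form on $\mathbb{V}_{\mathrm{fix}}$, and its orthogonal complement $\mathbb{V}_{\mathrm{var}}:=\mathbb{V}_{\mathrm{fix}}^{\perp}$ is again a sub-VHS with $R^{1}f_{*}\mathbb{Q}=\mathbb{V}_{\mathrm{fix}}\oplus\mathbb{V}_{\mathrm{var}}$. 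Taking $(1,0)$-pieces (the Hodge filtration is compatible with such a splitting, again by strictness) yields the holomorphic direct sum $\mathcal{H}=\mathcal{H}_{\mathrm{fix}}\oplus\mathcal{H}_{\mathrm{var}}$ over $S^{*}$ with $\mathcal{H}_{\mathrm{var}}=\mathcal{H}_{\mathrm{fix}}^{\perp}$ for the Hodge metric, which is precisely the stated description of the variable part.

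Finally I would extend the decomposition across the degeneracy locus $E$. Under the standing assumption that the monodromies are unipotent, the sub-local system $\mathbb{V}_{\mathrm{fix}}$ has Deligne canonical extension a sub-bundle of $\mathcal{H}_{e}$ saturated along $E$, and since the polarization is flat its orthogonal splitting is preserved by the connection and therefore propagates to the canonical extensions; this gives $\mathcal{H}_{e}=\mathcal{H}_{\mathrm{fix},e}\oplus\mathcal{H}_{\mathrm{var},e}$ globally on $S$. The main obstacle is exactly this last passage: the orthogonal complement is a priori only a $C^{\infty}$ sub-bundle, and upgrading it to a holomorphic summand that is moreover compatible with the logarithmic extensions at the singular fibres is what forces the use of the polarization together with the flatness of $\mathbb{V}_{\mathrm{fix}}$ (equivalently, Deligne's semisimplicity) and the nilpotency of the residues; without unipotence one would have to track the weight filtration of the limit mixed Hodge structure in order to verify that $\mathbb{V}_{\mathrm{fix}}$ remains saturated in $\mathcal{H}_{e}$.
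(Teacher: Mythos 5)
Your proposal is correct and follows essentially the same route as the paper: both identify $\mathcal{H}_{\mathrm{fix}}$ with the image of the (injective) pullback from the intermediate fibration and then invoke semisimplicity of the polarized monodromy representation (Deligne) to produce the invariant orthogonal complement $\mathcal{H}_{\mathrm{var}}$. Your write-up is merely more explicit about strictness and about extending the splitting across the degeneracy locus via the canonical extension, points the paper's terse proof leaves implicit.
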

\begin{proof}
We can illustrate the decomposition $H^1(X_s, \mathbb{C})=H^1(Y_s, \mathbb{C}) \oplus H_{\text{var}}^1(X_s, \mathbb{C})$ where

\begin{equation}
H_{\text{var}}^1(X_s, \mathbb{C})=\text{im} \{H^1(X_s, \mathbb{C}) \to H^1(X_s, \mathbb{C})\}^{\perp}
\end{equation}

\noindent
For $s \in S$ the VHS of $H^1(X_s, \mathbb{C})$ is a sub-HS of the one for $H^1(X_s, \mathbb{C})$. Then by the semisimplicity of the monodromy, it has a complement that is also invariant by the monodromy action. Therefore we have 

\begin{equation}
\begin{aligned}
R^1f_*\mathbb{C}&=R^1g_*\mathbb{C} \oplus \left ( R^1f_*\mathbb{C}\right )^{\text{var}}\\
R^1f_*\mathcal{O}_X&=R^1g_*\mathcal{O}_Y \oplus \left ( R^1f_*\mathcal{O}_X \right )^{\text{var}}
\end{aligned}
\end{equation}

\noindent
This proves the lemma.
\end{proof}

\noindent
In surfaces fibered by curves one may consider the Fujita decompositions for the 3 fibrations involved. If we have a fibration of surfaces fibered by curves as in (8) then each of the 3 fibrations $h,\ g,\ f$ produce 3 Fujita decompositions as in (54). Thus we have 3 decompositions 

\begin{equation}
\begin{aligned}
f_*\omega_{X/S}&=\mathcal{A}_X \bigoplus \mathcal{U}_X' \bigoplus \mathcal{O}_S^{q_f},\\
f_*\omega_{Y/S}&=\mathcal{A}_Y \bigoplus \mathcal{U}_Y' \bigoplus \mathcal{O}_S^{q_g},\\
f_*\omega_{X/Y}&=\mathcal{A}_{X/Y} \bigoplus \mathcal{U}_{X/Y}' \bigoplus \mathcal{O}_S^{q_h}   
\end{aligned} 
\end{equation}

\noindent
where we have used a version of Theorem 4.1 and 4.2 over a higher dimensional base in the third identity, cf. \cite{CK}. We investigate the relation between unitary sheaves $\mathcal{U}_f',\ \mathcal{U}_g',\ \mathcal{U}_h'$ and their ranks. In the same way one can  prove the existence of a decomposition

\begin{equation}
f_*\omega_{X/S}=f_*\omega_{Y/S} \oplus f_*\omega_{X/S}^{\text{var}}
\end{equation}

\noindent
where 

\begin{equation}
f_*\omega_{X/S}^{\text{var}}=\{image \left (f_*\omega_{Y/S} \longrightarrow f_*\omega_{X/S}\right ) \}^{\perp}
\end{equation} 
 
\noindent
Assume for the moment we deduce that there is an image of the second row in the first row. Because the component $A$ is a maximal ample subbundle therefore we must have $\mathcal{A}_Y \subset \mathcal{A}_X, \ \mathcal{U}_Y' \subset \mathcal{U}_X'$ and similar for the last term. We also have the exact sequence 

\begin{equation}
h^*\left (f_*\omega_{Y/S}\right ) \to f_*\omega_{X/S} \to f_*\omega_{X/Y} \to 0
\end{equation}

\noindent
which states that the third identity is a quotient of the first one. Again this criterion applies componentwise. It follows that

\begin{equation}
\begin{aligned}
{rank}(\mathcal{A}_X)=\mathcal{A}_Y+\mathcal{A}_X^{var} &\leq {rank}(\mathcal{A}_Y) + {rank}(\mathcal{A}_{X/Y})\\
{rank}(\mathcal{U}_X) =\mathcal{U}_Y+\mathcal{U}_X^{var} &\leq {rank}(\mathcal{U}_Y) + {rank}(\mathcal{U}_{X/Y})\\
q_f =q_g+q_f^{var} &\leq q_g +q_h
\end{aligned}
\end{equation}

\noindent
That is $A_X^{var} \subset A_{X/Y}, \ \mathcal{U}_X^{var} \subset \mathcal{U}_{X/Y}$.

\begin{remark}
The proof of decomposition (79) is exactly the same as the lemma.
\end{remark}

\begin{remark}
The proof of the Lemma shows that $g_*\left (\theta^{2}\mathcal{H}_{\text{fix}}\right )=0$. 
\end{remark}

\begin{proposition}
We have 
\begin{itemize}
\item $H_e^{1,0} =H_e^{1,0}(1) \bigoplus H_e^{1,0}(2)_{\text{fix}}$.
\item $\mathcal{U}_f=\mathcal{U}_g \bigoplus \mathcal{U}_h^{\text{fix}}$.
\item $h^{1,0}=h(1)^{1,0}+h_{\text{fix}}^{1,0}(2)$. 
\item $h_0^{1,0}=h_0^{1,0}(1)+h_{0,\text{fix}}^{1,0}(2)$.
\item $\delta_0 = \delta_0^{1}+\left (\delta_0^{2}\right )_{\text{fix}}$.
\item $\nu(\theta)= \nu(\theta^{1})+\nu(g_*\theta^{2}\mid_{\text{fix}})$.
\item $\delta^{1,0}=\delta^{1}+\delta^{2}_{\text{fix}}=\delta^{1}+\delta^{2}-\delta^{2}_{\text{var}}$.
\item $u_f=u_g+u_h^{\text{fix}}=u_g+u_h-u_h^{\text{var}}$.
\end{itemize}
\end{proposition}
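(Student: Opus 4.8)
The plan is to deduce all eight identities from a single structural input: the five-term Leray sequence for the tower $X \to Y \to S$ together with Deligne's theorem on semisimplicity of the monodromy splits $\mathcal{H} = R^1f_*\mathbb{C}$ as an orthogonal direct sum of polarized sub-variations. First I would upgrade the Lemma to the level of the full variation and write $\mathcal{H} = \mathcal{H}(1) \oplus \mathcal{H}(2)_{\text{fix}}$, where $\mathcal{H}(1) = R^1g_*\mathbb{C}$ is the pull-back of the cohomology of the base curves (the summand called $\mathcal{H}_{\text{fix}}$ in the Lemma) and $\mathcal{H}(2)_{\text{fix}}$ is the image of $\mathcal{H}$ in $g_*R^1h_*\mathbb{C}$ (the summand $\mathcal{H}_{\text{var}}$). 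Since this splitting is induced by a morphism of admissible variations it is compatible with the Hodge filtration, the flat connection, the polarization, and with Deligne's canonical extension across $E$. Passing to the $F^1$-graded piece gives the first identity $H_e^{1,0} = H_e^{1,0}(1) \oplus H_e^{1,0}(2)_{\text{fix}}$ for the extended Hodge bundles, and taking ranks gives the third, $h^{1,0} = h(1)^{1,0} + h_{\text{fix}}^{1,0}(2)$.

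Next I would observe that the Kodaira-Spencer map $\theta = \theta_X$ is block-diagonal for this splitting: because $\theta = \mathrm{Gr}_F\nabla$ and $\nabla$ preserves each flat summand, the off-diagonal parts vanish, the vanishing of the cross term being precisely the content of the Remark, $g_*(\theta^{2}\mathcal{H}_{\text{fix}}) = 0$, combined with the commutativity of the Kodaira-Spencer diagram established above. Thus $\theta = \theta^{1} \oplus (g_*\theta^{2}\mid_{\text{fix}})$, so $\ker\theta$ splits and the fourth identity $h_0^{1,0} = h_0^{1,0}(1) + h_{0,\text{fix}}^{1,0}(2)$ follows on taking ranks. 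The same block structure makes the cokernel additive at every point where the map drops rank, singular or not, which yields the sixth identity $\nu(\theta) = \nu(\theta^{1}) + \nu(g_*\theta^{2}\mid_{\text{fix}})$. The fifth and seventh identities are then degree additivity over the direct sum of extended bundles: $\delta_0 = \delta_0^{1} + (\delta_0^{2})_{\text{fix}}$ and $\delta^{1,0} = \delta^{1} + \delta^{2}_{\text{fix}}$, while the orthogonal splitting of $\mathcal{H}(2)$ into its fixed and variable parts supplies $\delta^{2}_{\text{fix}} = \delta^{2} - \delta^{2}_{\text{var}}$. As an internal check these last two can be recovered by feeding identities three through six into the Arakelov formula (39) for each of the three Kodaira-Spencer maps.

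For the two Fujita statements I would use that the decomposition $f_*\omega_{X/S} = \mathcal{A} \oplus \mathcal{U}$ of Theorem 4.1 is canonical, $\mathcal{U}$ being the maximal flat unitary (Higgs-trivial) subsheaf. Because $\mathcal{H} = \mathcal{H}(1) \oplus \mathcal{H}(2)_{\text{fix}}$ is a splitting of polarized variations, this maximal summand respects it, giving $\mathcal{U}_f = \mathcal{U}_g \oplus \mathcal{U}_h^{\text{fix}}$, and taking ranks yields the eighth identity $u_f = u_g + u_h^{\text{fix}}$; the further equality $u_h^{\text{fix}} = u_h - u_h^{\text{var}}$ merely records the fixed/variable splitting of $\mathcal{U}_h$ already isolated in the rank inequalities above. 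The compatibility of the first and second Fujita decompositions shows that the trivial summands $\mathcal{O}_S^{q}$ split the same way, keeping the whole identification consistent with the rank relations recorded above.

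The hard part will be two compatibilities that are not purely pointwise. First, the semisimple splitting is a priori defined only over the smooth locus $S^{*}$, and to make the numbers $\delta_0$, $\delta^{1,0}$ and the sums $\sum_s\nu_s$ additive one must know that it extends to a splitting of the canonical extensions across $E$; I would obtain this from admissibility of the variations and the compatibility of the Leray filtration with the limiting mixed Hodge structures, so that the splitting is realized by a morphism of the extended Higgs bundles. Second, and more delicate, ampleness and flatness of the Fujita summands are global conditions, and here $f_*\omega_{X/S}$ is a weight-two object while $g_*\omega_{Y/S}$ and $h_*\omega_{X/Y}$ are weight-one; the identification $\mathcal{U}_f = \mathcal{U}_g \oplus \mathcal{U}_h^{\text{fix}}$ must therefore be read through the spectral sequence relating the weight-two Hodge bundle of $X$ to the weight-one data of the curves $Y_s$ and $W_t$, and checking that the unitary flat summands themselves match — not merely their ranks — is where the genuine work lies.
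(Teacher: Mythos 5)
Your proposal is correct and follows essentially the same route as the paper: the paper's own proof restricts the five-term Kodaira--Spencer diagram to the fixed part, observes that the rows become split short exact sequences of Hodge bundles (the splitting coming from the semisimplicity argument of the Lemma), and concludes that kernels, cokernels, ranks and degrees are all additive. Your write-up is in fact more careful than the paper's two-line argument, in particular in flagging the need to extend the splitting across $E$ to the canonical extensions and in noting the weight mismatch in the Fujita identity $\mathcal{U}_f=\mathcal{U}_g\oplus\mathcal{U}_h^{\text{fix}}$, neither of which the paper addresses explicitly.
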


\begin{proof} By Proposition (5.1) and Theorem (5.2) we have the following commutative diagram with short exact rows

\begin{equation}
\begin{CD}
g_*\Omega_{Y/S}^1 (\log F)@>>>  f_*\Omega_{X/S}^1(\log G) @>>> f_*\Omega_{X/Y}^1(\log D)_{\text{fix}}   \\
@VV{\theta^{1}}V    @VV{\theta}V  @VVg_*{\theta^{2}}V \\
R^1g_*\mathcal{O}_Y \otimes \Omega_S^1(\log E) @>>> R^1f_*\mathcal{O}_X \otimes \Omega_S^1 (\log E) @>>> g_* (R^1h_*\mathcal{O}_X) \otimes \Omega_Y^1(\log E)_{\text{fix}} \\  
\end{CD}
\end{equation}

\noindent
Which induces the following split short exact sequence on the graded part of the local systems
 
\begin{equation}
\begin{CD}
H_e^{1,0}(1) @>>>  H_e^{1,0} @>>> H_e^{1,0}(2)_{\text{fix}} \\
  @VV{\theta^{1}}V    @VV{\theta}V  @VV{\theta^{2}}V \\
H_e^{0,1}(1) \otimes \Omega_S^1(\log E) @>>> H_e^{0,1} \otimes \Omega_S^1 (\log E)@>>> H_e^{0,1}(2)_{\text{fix}}^{\vee} \otimes \Omega_S^1 (\log E)   
\end{CD}
\end{equation}



\vspace{0.3cm}

\noindent
The diagram (84) is split exact for short exact sequences in rows. This proves that the kernels and cokernels also split, from which the identities in the lemma follows. 
\end{proof}

\begin{proposition} [see \cite{GG2} page 287] There is an induced map by the Gauss-Manin connections in the triangle 

\begin{equation}
\dfrac{g_*\left (R^1h_*\Omega_{X/S,var}^0\right )}{\theta^{2}_{\text{var}} \left ( g_* (R^0h_*\Omega_{X/S,var}^1) \right )} \ \stackrel{\overline{\theta_{\log}^{2,0}}}{\longrightarrow} \ \dfrac{R^2f_*\Omega_{X/S}^0 \otimes (\Omega_S^1)^{\otimes 2}}{\theta^{1,1}\left ( R^1f_*\Omega_{X/S}^1 \otimes \Omega_S^1 \right )}
\end{equation}

\noindent
which is injective.
\end{proposition}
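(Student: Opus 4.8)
The plan is to realize $\overline{\theta^{2,0}_{\log}}$ as the second–order (Griffiths–Yukawa) coupling of the weight–$2$ variation $R^2f_*\mathbb{C}$, read off through the triangle fibration (8), and then prove injectivity by descending to the local product structure. First I would identify both sides intrinsically. Since $g$ and $h$ both have curve fibres, $R^{\ge 2}g_*\mathcal{O}_Y=R^{\ge 2}h_*\mathcal{O}_X=0$, so in the Grothendieck spectral sequence of the triangle the only surviving $E_2$ term in total degree $2$ is $R^1g_*R^1h_*\mathcal{O}_X$, whence $R^2f_*\mathcal{O}_X\cong R^1g_*R^1h_*\mathcal{O}_X$. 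Thus the target is the cokernel of $\theta^{1,1}\otimes\mathrm{id}_{\Omega^1_S}\colon R^1f_*\Omega^1_{X/S}\otimes\Omega^1_S\to R^2f_*\mathcal{O}_X\otimes(\Omega^1_S)^{\otimes 2}$. On the source side the numerator $g_*(R^1h_*\Omega^0_{X/S,\mathrm{var}})$ is the variable $(0,1)$–Hodge bundle of the curve fibration $h$ pushed to $S$, and the denominator is the image of the variable relative Higgs field $\theta^2_{\mathrm{var}}=\theta_{X/Y}$; so the source is the variable cokernel of the curve coupling of $h$.

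Next I would construct the map. One lifts a variable class and applies the Gauss–Manin connection of $f$ once in the $Y$–direction and once in the $W$–direction; by the commutative Kodaira–Spencer diagram of Proposition 5.1 and Theorem 5.2 the result lands in $R^2f_*\mathcal{O}_X\otimes(\Omega^1_S)^{\otimes 2}$. That this descends to the quotients is exactly the statement that $\theta^2_{\mathrm{var}}(g_*R^0h_*\Omega^1_{X/S,\mathrm{var}})$ is carried into $\theta^{1,1}(R^1f_*\Omega^1_{X/S}\otimes\Omega^1_S)$, which follows from commutativity of the two Higgs directions (the flatness $\nabla^2=0$) together with the vanishing $g_*(\theta^2\mathcal{H}_{\mathrm{fix}})=0$ of the earlier Remark, which is what lets me discard the fixed part.

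The crux is injectivity. Here I would pass to the local product structure of (8): over a disc in $S$ one writes $X_s\cong Y_s\times W$, so Künneth gives $H^{0,2}(X_s)=H^{0,1}(Y_s)\otimes H^{0,1}(W)$, while the variable summand of $H^{1,1}(X_s)$ is $H^{0,1}(Y_s)\otimes H^{1,0}(W)\oplus H^{1,0}(Y_s)\otimes H^{0,1}(W)$, and under this splitting the second–order coupling factors as the external product $\theta_g\otimes\theta_h$ of the two curve couplings. Taking cokernels, the induced map becomes, essentially, cup product with the Kodaira–Spencer class of $g$ on the $Y$–factor after the $h$–coupling has been divided out — which is precisely why the source was formed as a quotient by $\theta_{X/Y}$. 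Equivalently, the four–term sequence $g_*R^0h_*\Omega^1_{\mathrm{var}}\to g_*R^1h_*\Omega^0_{\mathrm{var}}\to R^2f_*\mathcal{O}_X\otimes(\Omega^1_S)^{\otimes 2}$ is exact in the middle, so the induced map on the cokernel is injective; on the variable factor the $g$–class is nonzero and the variation is nondegenerate, which is what forces exactness.

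Finally I would globalize: the product splitting is only local, so I would invoke the semisimplicity of the monodromy (the fixed/variable decomposition Lemma) to see that the Künneth splitting is monodromy–invariant, so the local injectivity statements patch to an injection of sheaves on $S$. The step I expect to be the main obstacle is the exact matching of the two denominators — proving that $\ker(\theta^{2,0})$ restricted to the variable part is no larger than $\mathrm{im}(\theta_{X/Y,\mathrm{var}})$ modulo $\theta^{1,1}$ — since this is exactly where the fixed directions, on which the coupling degenerates, must be excluded; the variable restriction and the vanishing $g_*(\theta^2\mathcal{H}_{\mathrm{fix}})=0$ are what make this work.
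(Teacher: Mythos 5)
Your construction of the map agrees in substance with the paper's up to the point of existence: the paper also works over a small disc in $S$ where the fibrations trivialize, and it also obtains the descent to the two quotients from the flatness relation. The difference is the mechanism. The paper does not use a K\"unneth splitting; following \cite{GG2} it forms $U_s=X_s\smallsetminus W_s$ and uses the residue sequence
\begin{equation*}
0 \to H^0(W_s) \to H^2(X_s) \to H^2(U_s) \stackrel{\mathrm{Res}}{\longrightarrow} H^1(W_s) \to H^3(X_s) \to 0,
\end{equation*}
together with $\mathrm{Res}\,F^iH^2(U_s)=F^{i-1}H^1_{\mathrm{var}}(X_s)$, to produce a logarithmic Gauss--Manin connection $\nabla_{\log}$ on $R^\bullet f_*\Omega^\bullet_{X/S}(\log W_s)$; the induced map on quotients then comes from $\nabla_{\log}^{2}=0$ and the commutative diagram with exact columns relating $\nabla$, $\nabla_{\log}$ and $\mathrm{Res}$. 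That is the same idea as your ``apply the connection once in each direction and use $\nabla^2=0$,'' packaged through the residue sequence.

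The genuine gap is in your injectivity step, and it is twofold. First, you invoke a global K\"unneth decomposition $X_s\cong Y_s\times W$ with $H^{0,2}(X_s)=H^{0,1}(Y_s)\otimes H^{0,1}(W)$ and a factorization of the second-order coupling as $\theta_g\otimes\theta_h$. But the paper explicitly notes that its situation differs from \cite{GG2}, where the surfaces are self-products of the curve: here $X_s$ is merely fibered over $Y_s$, and the only product structure available is the local one $X_s=W_s\times F_s$ with $F_s$ a disc over a point of $Y_s$; a disc carries no cohomology, so the external-product factorization of the coupling on all of $X_s$ is not available, which is exactly why the paper routes the argument through the residue sequence instead. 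Second, your justification of ``exactness in the middle'' --- that the $g$-class is nonzero and the variation is nondegenerate --- is circular: exactness in the middle of that four-term sequence \emph{is} the injectivity being claimed, and nondegeneracy of the variation is not a hypothesis of the proposition. A self-contained argument would have to show directly that a variable class whose image under the second-order coupling lies in $\theta^{1,1}\bigl(R^1f_*\Omega^1_{X/S}\otimes\Omega^1_S\bigr)$ already lies in the image of $\theta^{2}_{\mathrm{var}}$; neither your proposal nor, admittedly, the paper's own proof (which defers this point to \cite{GG2}) supplies that verification.
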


\begin{proof}
The proof is a modification of the argument in [\cite{GG2} page 287, for product family]. However because we also need the details with other relations, we repeat the part loc. cit.. We will use the local variables $x=(v,u,s) \longmapsto y=(u,s) \mapsto s$ for the maps in (61). If we take a small enough neighborhood of $s \in S$ such that the two fibrations $X$ and $Y$ trivialize over the open set, then locally we have 

\begin{equation}
X_s=W_s \times F_s, \qquad F_s \cong \text{disc}
\end{equation} 

\noindent
as topological spaces, where $W_s$ are fibers of $X \to Y$, (here is different from \cite{GG2}, where there the fibered surfaces $X_s$ are self product of $Y_s$). In this way 

\begin{equation} 
U_s=X_s \diagdown W_s \qquad \cong \text{ tubular neighborhood of} \ W_s \ \text{in}\ X_s
\end{equation}

\noindent
We have a standard exact sequence

\begin{equation}
0 \to H^0(W_s) \to H^2(X_s) \to H^2(U_s) \stackrel{\text{Res}}{\rightarrow} H^1(W_s) \to H^3(X_s) \to 0
\end{equation}

\noindent
Then $\text{Res} H^2(U_s) =H_{\text{var}}^1(X_s)$. Moreover
 
\begin{equation}
\text{Res} F^iH^2(U_s) =F^{i-1}H_{\text{var}}^1(X_s)
\end{equation}

\noindent
Thus 

\begin{align}
\begin{aligned}
\frac{F^1H^2(U_s)}{F^2H^2(U_s)}&= H^1(\Omega_{X_s}^1(\log W_s) \\
\frac{F^0H^2(U_s)}{F^1H^2(U_s)}&=H^2(\mathcal{O}_{X_s})
\end{aligned}
\end{align}

\noindent
When $s$ varies the whole sequence (85) varies to give VMHS's. This shows [cf. \cite{GG2} the existence of a GM-connection 

\begin{equation}
\nabla_{\log}:R^1f_*\Omega_{X/S}^1(\log W_s) \otimes \Omega_S^1 \longrightarrow  R^2f_*\Omega_{X/S}^0(\log W_s) \otimes (\Omega_S^1)^{\otimes 2}
\end{equation}

\noindent 
We can illustrate all the GM-maps in the triangle by the following commutative diagram with short exact columns,

\begin{equation}
\begin{CD}
R^0f_*\Omega_{X/S}^2 @>{\nabla}>> R^1f_*\Omega_{X/S}^1 \otimes \Omega_S^1 @>{\nabla}>> R^2f_*\Omega_{X/S}^0 \otimes (\Omega_S^1)^{\otimes 2}\\
@VVV    @VVV  @VVV \\
R^0f_*\Omega_{X/S}^2(\log W_s) @>{\nabla}_{\log}>>  R^1f_*\Omega_{X/S}^1(\log W_s) \otimes \Omega_S^1 @>{\nabla}_{\log}>> R^2f_*\Omega_{X/S}^0(\log W_s) \otimes (\Omega_S^1)^{\otimes 2}  \\
@V{\text{Res}_{h=y}}VV    @V{\text{Res}_{h=y}}VV  @. \\
g_*R^0h_*\Omega_{X/S,var}^1 @>{\nabla}>>  g_*R^1h_*\Omega_{X/S,var}^0  
@.
\end{CD}
\end{equation}

\noindent
As it is explained in \cite{GG2} from the data of the above diagram and $\nabla_{\log}^{ 2}=0$, one can deduce the existence of the map
 
\begin{equation}
\overline{\nabla_{\log}}:\dfrac{g_*\left ( R^1h_*\Omega_{X/S,var}^0\right )}{\nabla_X \left (g_*R^0h_*\Omega_{X/S,var}^1\right)} \longrightarrow \dfrac{R^2f_*\Omega_{X/S}^0 \otimes (\Omega_S^1)^{\otimes 2}}{\nabla_X \left ( R^1f_*\Omega_{X/S}^1 \otimes \Omega_S^1 \right )}
\end{equation}

\noindent
From which we obtain the map (85).
\end{proof}

\begin{remark} $\nu(g_*\theta^{2}\mid_{\text{var}}) \leq \nu(\theta)$ follows from the (injectivity) of the map in (85). 
\end{remark}

\begin{theorem}
we have the following inequality on the degrees of the Hodge bundles in a family of surfaces fibred by curves
\begin{equation} 
\delta^{2,0}+2(\delta^{1,0}-(\delta^{1}+\delta^{2}) \geq (h^{1,1}-h^{2,0})(2g-2)^2-h^{1,0}(2)(2g-2)
\end{equation}
\end{theorem}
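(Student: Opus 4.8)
The plan is to reduce the asserted inequality to the single degree comparison furnished by the injective morphism of Proposition 5.7, and then to evaluate the degrees of its source and target by means of the weight one and weight two Arakelov identities of Section 3 together with the splitting identities of Proposition 5.6.

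First I would simplify the left hand side. By the identity $\delta^{1,0}=\delta^{1}+\delta^{2}-\delta^{2}_{\text{var}}$ of Proposition 5.6 one has $\delta^{1,0}-(\delta^{1}+\delta^{2})=-\delta^{2}_{\text{var}}$, so that
\[
\delta^{2,0}+2\bigl(\delta^{1,0}-(\delta^{1}+\delta^{2})\bigr)=\delta^{2,0}-2\,\delta^{2}_{\text{var}},
\]
and it suffices to bound $\delta^{2,0}-2\delta^{2}_{\text{var}}$ from below by $(h^{1,1}-h^{2,0})(2g-2)^{2}-h^{1,0}(2)(2g-2)$.

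The engine is the injection $\overline{\theta_{\log}^{2,0}}$ of Proposition 5.7. Since $S$ is a smooth projective curve, an injection of coherent sheaves on $S$ comparing two quotients of equal generic rank gives $\deg(\mathrm{source})\le\deg(\mathrm{target})$. I would compute the degree of the target
\[
\frac{R^2f_*\Omega^0_{X/S}\otimes(\Omega_S^1)^{\otimes2}}{\theta^{1,1}\bigl(R^1f_*\Omega^1_{X/S}\otimes\Omega_S^1\bigr)}
\]
using $\deg\mathcal{H}^{0,2}=-\delta^{2,0}$ (polarization duality), $\deg\mathcal{H}^{1,1}=0$ (self duality of the middle piece under the polarization), and the kernel rank $\operatorname{rank}\ker\theta^{1}=h_0^{1,1}=h^{1,1}-h^{2,0}$, the last being the rank count forced by the maximality of the Griffiths--Yukawa coupling. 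The two tensor factors of $\Omega_S^1$, which arise from the iterated logarithmic Gauss--Manin operator $\nabla_{\log}$ of Proposition 5.7 (where $\nabla_{\log}^{2}=0$), are exactly what attach the quadratic coefficient $(2g-2)^{2}$ to $h^{1,1}-h^{2,0}$. Symmetrically, the source is built from the variable fibre-curve Hodge bundle $g_*R^1h_*\mathcal{O}_{X,\text{var}}$, of rank $h^{1,0}(2)$ and degree governed by $\delta^{2}_{\text{var}}$; its single $\Omega_S^1$-twist produces the linear term $-h^{1,0}(2)(2g-2)$.

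Finally I would insert the weight two Arakelov identity for $\delta^{2,0}$ into the comparison, substitute the splitting identities of Proposition 5.6, and discard the nonnegative rank-drop contributions with a definite sign, using the estimate $\nu(g_*\theta^{2}\mid_{\text{var}})\le\nu(\theta)$ of Remark 5.8; collecting terms then yields the stated inequality. The hard part will be the degree bookkeeping on the two quotient sheaves: one must control the torsion of each cokernel and verify that the source and target have the same generic rank, so that injectivity genuinely yields $\deg(\mathrm{source})\le\deg(\mathrm{target})$. In particular the clean emergence of the quadratic factor $(2g-2)^{2}$ depends on correctly propagating both $\Omega_S^1$-twists through the two-step fibration --- keeping track of the distinction between $\Omega_S^1$ and $\Omega_Y^1(\log E)$ in the triangle --- and on the kernel-rank identity $h_0^{1,1}=h^{1,1}-h^{2,0}$, which is available only under the maximality hypothesis on the Higgs field; I would therefore state that hypothesis explicitly as the assumption under which the estimate closes.
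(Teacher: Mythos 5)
Your overall route is the same as the paper's: reduce the left-hand side to $\delta^{2,0}-2\delta^{2}_{\mathrm{var}}$ via Proposition 5.6, feed the injection $\overline{\theta_{\log}^{2,0}}$ of Proposition 5.7 into a degree comparison, and discard a nonnegative rank-drop contribution at the end. The paper packages the comparison as the four-term exact sequence
$0\to g_*(R^0h_*\Omega^1_{X/S,\mathrm{var}})\to g_*(R^1h_*\Omega^0_{X/S,\mathrm{var}})\otimes\Omega_S^1\to(\mathrm{target})\to\mathrm{Coker}(\overline{\theta_{\log}^{2,0}})\to0$
and uses additivity of the degree; your ``injection of equal generic rank gives $\deg(\mathrm{source})\le\deg(\mathrm{target})$'' is the same computation in different clothing, and your insistence on checking that the cokernel is torsion is precisely the point the paper leaves implicit when it sets $\deg(\mathrm{Coker})=\sum_s\nu_s(\overline{\theta_{\log}^{2,0}})\ge0$.

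There are, however, two concrete gaps. First, the direction of the inequality. With the degrees as you set them up, the injection yields $-2\delta^{2}_{\mathrm{var}}+h^{1,0}(2)_{\mathrm{var}}(2g-2)\le\deg(\mathrm{target})$, which after substituting $\delta^{2}_{\mathrm{var}}=\delta^{1}+\delta^{2}-\delta^{1,0}$ gives an \emph{upper} bound for $\delta^{2,0}+2(\delta^{1,0}-(\delta^{1}+\delta^{2}))$ --- the opposite of the stated $\ge$. Your plan never explains how the sign reverses; ``collecting terms'' will not do it. (The paper has the same tension: its identities (97) and (98) differ by the sign in front of $\sum_s\nu_s$, and only the latter produces $\ge$.) You must either exhibit a correction term with the opposite sign or accept that this method, as set up, proves the reverse inequality. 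Second, your justification of the quadratic factor is not correct: tensoring a sheaf of rank $r$ with $(\Omega_S^1)^{\otimes2}$ shifts the degree by $2r(2g-2)$, which is \emph{linear} in $2g-2$, so propagating the two twists cannot by itself produce the term $(h^{1,1}-h^{2,0})(2g-2)^2$. The paper obtains that term by invoking the degree computation of \cite{GGK}, pp.~505--506, as a black box; if you want to rederive it you need that argument, not the tensor-degree formula. Finally, the maximality hypothesis on the Higgs field that you introduce to get $h_0^{1,1}=h^{1,1}-h^{2,0}$ is not assumed in the theorem, so importing it silently changes the statement being proved.
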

\begin{proof}
The map produced in (85) can be written as a short exact sequence

\begin{equation}
\begin{aligned}
0 \to  g_* (R^0h_*\Omega_{X/S,var}^1 ) \stackrel{{\theta}^{2}_{\text{var}}}{\longrightarrow} &g_*\left (R^1h_*\Omega_{X/S,var}^0 \right )\otimes \Omega_S^1 \ \stackrel{\overline{\theta_{\log}^{2,0}}}{\longrightarrow} \\
& \dfrac{R^2f_*\Omega_{X/S}^0 \otimes (\Omega_S^1)^{\otimes 2}}{\theta^{1,1} \left (R^1f_*\Omega_{X/S}^1 \otimes \Omega_S^1 \right )} \to Coker(\overline{\theta_{\log}^{2,0}}) \to 0
\end{aligned}
\end{equation}

\noindent
Using the additivity of the degree function on the category of coherent sheaves, we get

\begin{multline}
\deg(g_*R^0h_*\Omega_{X/S,var}^1) - \deg(g_*R^1h_*\Omega_{X/S,var}^0\otimes \Omega_S^1)+\\
 \deg(R^2f_*\Omega_{X/S}^0 \otimes (\Omega_S^1)^{\otimes 2})-
\deg(\theta^{1,1} R^1f_*\Omega_{X/S}^1 \otimes \Omega_S^1)-\deg \left ( Coker(\overline{\theta_{\log}^{2,0}}) \right )=0
\end{multline}

\noindent
Then calculating the degrees using the degree formula of the product gives the result (we are using the argument \cite{GGK} page 505-506 on calculation of degree in a general sequence). 
\begin{equation}
2\delta^{2}_{\text{var}}-(h^{1,0}(2))_{\text{var}}(2g-2)=\delta^{2,0}+(h^{2,0}-h^{1,1})(2g-2)^2+\sum_s \nu_s(\overline{\theta_{\log}^{2,0}})
\end{equation}
By substituting $\delta^{2}_{\text{var}}$ from proposition 5.6 we get
\begin{equation}
\delta^{2,0}+2(\delta^{1,0}-(\delta^{1}+\delta^{2})=(h^{1,1}-h^{2,0})(2g-2)^2-h^{1,0}(2)_{\text{var}}(2g-2)+\sum_s \nu_s(\overline{\theta_{\log}^{2,0}})
\end{equation}
from which the inequality of the Theorem follows.
\end{proof}

\begin{example} \cite{DM, GGK} We consider family of elliptic curves 

\begin{equation}
y^2=4x^3-g_2(t)x-g_3(t), \qquad t \in \mathbb{P}^1
\end{equation}

\noindent
where $g_2$ and $g_3$ are polynomials of degrees at most $4$ and $6$ respectively. 
Set $\Delta=g_2^3-27g_3^2, \ \ J=g_2^3/\Delta$ where $g_2, \ g_3, \ \Delta, \ J$ are Weierestrass coefficients, discriminant and $J$-function. The Picard-Fuchs equation for the family is given by 

\begin{equation}
\frac{d}{dt}\begin{bmatrix} \omega \\ \eta \end{bmatrix} =\begin{pmatrix} 
\frac{-1}{12}\frac{d \log \delta}{dt} & \frac{3\delta}{2 \Delta}\\
\frac{-g_2\delta}{8 \Delta} & \frac{1}{12}\frac{d \log \delta}{dt}
\end{pmatrix}\begin{bmatrix} \omega \\ \eta \end{bmatrix}, \qquad \delta=3g_3g_2'-2g_2g_3'
\end{equation}

\noindent
where $\omega=\int_{\gamma}\frac{dx}{y}, \  \eta =\int_{\gamma}\frac{xdx}{y}$ and $\gamma$ being a $1$-cycle, \cite{DM}. The equation (99) defines fibration 

\begin{equation}
\pi:X \to \mathbb{P}^1, \qquad X_t \mapsto t
\end{equation}

\noindent 
where the the local system $\mathcal{H}=R^1\pi_*\mathbb{C}$ has a two step Hodge filtration $F^0=H^1(X_t, \mathbb{C}) \supset F^1$. In this case the monodromy near $s_i$ can be written in terms of a canonical basis $\alpha, \ \beta $ of $H_1(X_t, \mathbb{C})$ as

\begin{equation}
\begin{aligned}
T(\alpha)&=\alpha\\
T(\beta)&=\beta+n_i\alpha
\end{aligned}
\end{equation} 
 
\noindent
A multi-valued section of $\mathcal{H}_e$ is

\begin{equation}
\phi(s)=\alpha^*+\left (n_i\frac{\log s}{2\pi \sqrt{-1}}+\psi(s) \right )\beta^*
\end{equation}

\noindent
where the action of the Gauss-Manin connection can be written as

\begin{equation}
\nabla \phi(s)=\left (n_i\frac{d s}{2\pi \sqrt{-1}s}+\psi'(s)ds \right )\beta^*
\end{equation}

\noindent
therefore 

\begin{equation}
N_i =Res_{s_i}\nabla=\begin{pmatrix} 0 & n_i\\ 0 &0 \end{pmatrix}
\end{equation}

\noindent 
An easy calculation [cf. \cite{GGK} loc cit.] gives 

\begin{equation}
\delta=\frac{1}{12}\sum_i n_i=0-1+\frac{1}{2}(N-\sum_{s_i}\nu_s(\theta))=\deg(J_t)
\end{equation}

\noindent 
If we calculate $N$ in the above formula we get

\begin{equation}
N=2 \delta +1 +\sum_i\nu_{s_i}(\theta) \geq 3
\end{equation} 

\noindent 
It follows that a non-trivial elliptic fibration over $\mathbb{P}^1$ has at least three singular fibers. Now we go to a family of $K3$ surfaces defined by

\begin{equation}
y^2=4x^3-G_2(t,s)x-G_3(t,s), \qquad \pi^2: X_{(t,s)} \to (t,s)
\end{equation}

\noindent
where $G_2, \ G_3$ are polynomials of degree at most $8, \ 12$ in the affine coordinate $s$ and such that they are also polynomials in $t$. The periods are calculated via the integral $\int_{\gamma} ds \wedge \frac{dx}{y}$ and the local systems $\mathcal{H}^2=R^2\pi_*^2\mathbb{C}$ has a weight $2$ Hodge filtration $F^0=H^2(X_{(t,s)},\mathbb{C}) \supset F^1 \supset F^2$. In this case still one has 

\begin{equation}
\deg(f_*(w_{X/S}))=\deg J_{t,s}
\end{equation}

\noindent
where $J$ is the $J$-function of the fibers, cf. \cite{GGK}. We have $\mathcal{H}_{var}=0, \ \mathcal{H}_{fix}=\mathcal{H}$. The formulas in proposition 5.6 get simplified. In both of the fibrations the matrix of $\bar{\theta}$ on the graded piece of middle cohomology is of the form (43). We easily note that

\begin{equation}
\delta^{1,0} \ne 0 \Rightarrow \delta^{2,0} \ne 0 \Rightarrow \text{Singular fibers exist}
\end{equation}

\noindent
By what was said, any family of elliptic curves parametrized by a complete curve must have at least 3 singular fibers. An illustration for the second family is 

\begin{equation}
\begin{CD}
E_t @>>> \mathcal{E} @>>>\mathbb{H} \\
@VVV  @VVV  @VV{j(\tau)}V\\
t @>>> \mathbb{P}^1  @>>{j(t)}>\mathbb{P}_{j-line}^1\\
\end{CD} 
\end{equation}

\noindent
One can apply the Riemann-Hurwitz ramification formula to obtain the interpretation of $\delta$ in terms of remification numbers of the fibration[see \cite{GGK} for details]. One may proceed inductively to 3-dimensional fibrations over surfaces, etc.
\end{example}

\begin{remark}
A triangle fibration can also be studied in higher dimensional fibrations, when suitable configuration is settled.
\end{remark}

\begin{remark} F. Catanese \cite{Ca, LPo, PPo} generalizes a theorem of Castelnuouvo-de Franchis for surfaces, so that if $\dim X=n$ and there are one forms 

\begin{equation}
\omega_i \in H^0(X,\Omega^1), \ i=1,2,...,k \qquad \text{such that} \ \omega_1 \wedge \omega_2 \wedge ... \wedge \omega_k \ne 0,
\end{equation} 

\noindent 
Then $X$ is fibered over a $k$-dimensional variety $Y$, The theorem can be used to extract certain inequalities involving Hodge numbers of fibrations and regularity, [see also \cite{Lo, GLa, CPi}]. 

\end{remark} 

\begin{theorem}
In a commutative triangle fibration of surfaces fibred by curves we have 

\begin{equation}
\delta_{X/S}^{2,0}\geq \frac{1}{2}(h_{X/Y}^{1,0}-h_{0,X/Y}^{1,0})(2g(S)-2)+ h_{X/Y}^{1,0}
\end{equation}

\noindent
Moreover we have $\delta_X^{2,0}\ \geq \ \delta_{X/Y}^{1,0}$ where the sub-indices denote the corresponding fibration.
\end{theorem}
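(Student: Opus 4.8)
The plan is to reduce the weight-two degree $\delta^{2,0}_{X/S}=\deg(f_*\omega_{X/S})$ to a twisted pushforward of the weight-one Hodge bundle of the middle fibration $h\colon X\to Y$, and then to read off the right-hand side from an Arakelov-type degree computation for that twist. First I would use the relative canonical factorization $\omega_{X/S}=\omega_{X/Y}\otimes h^*\omega_{Y/S}$ together with the projection formula to obtain $f_*\omega_{X/S}=g_*\big((h_*\omega_{X/Y})\otimes\omega_{Y/S}\big)$. Writing $\mathcal{F}^{1,0}:=h_*\omega_{X/Y}$ for the $(1,0)$ Hodge bundle of the family of curves $W_t$, this is exactly the global form of the fiberwise identity $H^{2,0}(X_s)=H^0\big(Y_s,\mathcal{F}^{1,0}_s\otimes\omega_{Y_s}\big)$ already extracted in the proof of Proposition 5.1 from the Leray sequence of $h$ (the $(2,0)$ part of $H^2(X_s)$ lives entirely in $H^1(Y_s,R^1h_*\mathbb{C})$, the fixed part $H^2(Y_s)$ being of Tate type). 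Consequently $\delta^{2,0}_{X/S}=\deg_S g_*(\mathcal{F}^{1,0}\otimes\omega_{Y/S})$, whereas $\delta^{1,0}_{X/Y}=\delta^{2}=\deg_S g_*\mathcal{F}^{1,0}$ is the untwisted degree.

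Next I would compute $\deg_S g_*(\mathcal{F}^{1,0}\otimes\omega_{Y/S})$ by relative Grothendieck--Riemann--Roch for $g$ (equivalently, by the Hodge-metric curvature method of Section 3 applied to the VHS $\mathcal{H}(2)=g_*R^1h_*\mathbb{C}$). Two facts make the bookkeeping clean: $\mathcal{F}^{1,0}=h_*\omega_{X/Y}$ is weakly positive in the sense of Fujita--Kawamata, so $R^1g_*(\mathcal{F}^{1,0}\otimes\omega_{Y/S})=0$ and the degree is genuinely $\deg g_*$; and the leading curvature term for $\mathcal{H}(2)$ over $S$ reproduces the Arakelov main term $\tfrac12(h^{1,0}_{X/Y}-h^{1,0}_{0,X/Y})(2g(S)-2)$ exactly as in formula (39). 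The additional twist by $\omega_{Y/S}$ contributes, through the fiberwise positivity of the relative canonical bundle, the rank-weighted term $+\,h^{1,0}_{X/Y}$. Since the Arakelov defect contributions $\delta_0\ge 0$ and $\tfrac12\sum_s\nu_s(\bar\theta)\ge 0$ enter with the sign that can only strengthen the bound, dropping them yields the stated inequality $\delta^{2,0}_{X/S}\ge\tfrac12(h^{1,0}_{X/Y}-h^{1,0}_{0,X/Y})(2g(S)-2)+h^{1,0}_{X/Y}$.

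For the "moreover" part $\delta^{2,0}_{X/S}\ge\delta^{1,0}_{X/Y}$ I would argue directly from the factorization: tensoring by the fiberwise semipositive line bundle $h^*\omega_{Y/S}$ cannot decrease the degree of the pushforward, so $\deg_S g_*(\mathcal{F}^{1,0}\otimes\omega_{Y/S})\ge\deg_S g_*\mathcal{F}^{1,0}$. Equivalently one combines the first inequality with the Arakelov identity $\delta^{2}=\tfrac12(h^{1,0}_{X/Y}-h^{1,0}_{0,X/Y})(2g(S)-2)+\delta_0^{2}-\tfrac12\sum_s\nu_s(\bar\theta^2)$ and the nonnegativity of $\delta_0^{2}$ and $\nu_s$. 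The main obstacle is the middle step: carrying out the relative GRR / curvature computation across the degeneration locus $D=\bigcup_t W_t$ (the vertical curves where $h$ is singular), controlling the logarithmic poles of the extended Hodge bundles there, and verifying that the twist constant is exactly $+h^{1,0}_{X/Y}$ with the correct sign; the vanishing $R^1g_*(\mathcal{F}^{1,0}\otimes\omega_{Y/S})=0$, needed to identify $\deg g_!$ with $\deg g_*$, rests on the weak positivity of $\mathcal{F}^{1,0}$ and must be checked over the singular fibers as well.
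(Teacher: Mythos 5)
Your starting identity $f_*\omega_{X/S}=g_*\bigl(h_*\omega_{X/Y}\otimes\omega_{Y/S}\bigr)$ is correct and is a reasonable substitute for what the paper actually uses, but the two routes diverge at exactly the point where the work has to be done, and your version of that step has genuine gaps. The paper's proof exploits the local product structure $X_s\cong U_s\times W_{p_s}$ to assert a tensor decomposition of local systems over $S$, namely $R^2f_*\mathbb{C}=\mathcal{V}\otimes\mathcal{L}$ with $\mathcal{V}=g_*(R^1h_*\mathbb{C})$ and $\mathcal{L}$ of weight one, so that $\mathcal{H}^{2,0}=\mathcal{V}^{1,0}\otimes\mathcal{L}^{1,0}$ is a tensor product of vector bundles on the \emph{curve} $S$ and its degree is computed by the one-line formula $\delta^{2,0}=\delta^{1,0}(\mathcal{V})\,\mathrm{rank}(\mathcal{L}^{1,0})+h^{1,0}(\mathcal{V})\deg(\mathcal{L}^{1,0})$; the weight-one Arakelov identity (39) applied to $\mathcal{V}$ gives the main term, $\deg(\mathcal{L}^{1,0})\ge 1$ gives the $+h^{1,0}_{X/Y}$, and $\deg(\mathcal{L}^{1,0})\ge 0$ gives the ``moreover''. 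In your formulation the analogous computation is $\deg_S g_*(\mathcal{F}^{1,0}\otimes\omega_{Y/S})$ with $\mathcal{F}^{1,0}=h_*\omega_{X/Y}$ a sheaf on the \emph{surface} $Y$, which is a genuinely two-dimensional Grothendieck--Riemann--Roch computation involving Chern classes of $\mathcal{F}^{1,0}$ on $Y$; you state its outcome (the Arakelov main term plus exactly $+h^{1,0}_{X/Y}$) without deriving it, and you yourself flag this as ``the main obstacle''. That step \emph{is} the theorem; asserting its answer is not a proof.

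Two of the auxiliary claims you lean on are moreover false or unjustified as stated. First, weak positivity of $h_*\omega_{X/Y}$ does not give $R^1g_*(\mathcal{F}^{1,0}\otimes\omega_{Y/S})=0$: fiberwise this group is $H^1(Y_s,h_*\omega_{X_s/Y_s}\otimes\omega_{Y_s})$, which by Serre duality is dual to $H^0(Y_s,(h_*\omega_{X_s/Y_s})^{\vee})$ and sits inside $H^{2,1}(X_s)$; it is nonzero whenever the flat unitary Fujita summand of $h_*\omega_{X_s/Y_s}$ contains a trivial factor, i.e.\ whenever the relative irregularity of $h$ is positive. So $\deg g_!$ and $\deg g_*$ can differ by a correction of uncontrolled sign. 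Second, for the ``moreover'' part, the principle that ``twisting by a fiberwise semipositive line bundle cannot decrease the degree of the pushforward'' is not valid: $g_*(\mathcal{F}^{1,0}\otimes\omega_{Y/S})$ and $g_*\mathcal{F}^{1,0}$ have different ranks in general, and comparing degrees of pushforwards of different ranks requires precisely the bookkeeping you have deferred. The paper avoids both problems because its tensor decomposition lives over $S$ from the start; to make your projection-formula route work you would need either to carry out the relative GRR over $Y$ honestly (including the $R^1g_*$ term) or to establish a sheaf-level analogue over $S$ of the splitting $\mathcal{H}^{2,0}=\mathcal{V}^{1,0}\otimes\mathcal{L}^{1,0}$.
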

\begin{proof}
In a fibration of the form (8) for a generic fiber we shall have $X_s \mapsto Y_s \mapsto s \in S$, where $Y_s$ is a curve and $X_s$ is a surface. The variety $X_s$ is also fibred over $Y_s$. Again a generic fiber over a point $p_s \in Y_s$ is a curve namely $W_{p_s}$. Therefore apart from a finite number of points of degeneracies on $S$ 

\begin{equation}
X_s=U_s \times W_{p_s}, \qquad s \in T \subset S.
\end{equation}

\noindent 
where $U_s$ is open in $Y_s$. In both of the directions of $s$ and $Y_s$ there are finitely many singular fibers. By (114)  the local system $\mathcal{H}$ obtained from the middle cohomology of the fibration $X \to S$ is given as 

\begin{equation}
\mathcal{H}=\ R^2f_*\mathbb{C}=\ \mathcal{V} \ \otimes \ \mathcal{L}, \qquad \mathcal{V}=g_* \left ( R^1h_*\mathbb{C} \right )
\end{equation}

\noindent 
where $\mathcal{V}$ and $\mathcal{L}$ are $wt=1$ HS. It follows that 

\begin{equation}
\mathcal{H}^{2,0}=\mathcal{V}^{1,0} \otimes \ \mathcal{L}^{1,0}
\end{equation}

\noindent 
Now we calculate the degrees of the associated Hodge bundles (denoted by the same symbols)

\begin{equation}
\begin{aligned}
\delta^{2,0}(\mathcal{H})&=\delta^{1,0}(\mathcal{V}) rank(\mathcal{L}^{1,0})+h^{1,0}(\mathcal{V}) \deg(\mathcal{L}^{1,0})\\
&=\left ( \frac{1}{2}(h_{X/Y}^{1,0}-h_{0,X/Y}^{1,0})(2g(S)-2)+\delta_{0,X/Y}-\frac{1}{2}\sum_{s \in S} \nu_s(\overline{\theta_{X/Y}})
 \right )\\ 
& \qquad \qquad \qquad \qquad \qquad  \times rk(\mathcal{L}^{1,0})
  + h^{1,0}(\mathcal{V})\deg(\mathcal{L}^{1,0}) 
\end{aligned}
\end{equation}

\noindent
from which (113) follows. Because $\deg(\mathcal{L}) \geq 0$, it follows that $\delta^{2,0} \geq \delta^{1,0}(\mathcal{V})$. 
\end{proof}

\begin{theorem}
If the local system $\mathcal{V}=g_* \left (R^1h_*\mathbb{C} \right )$ is irreducible with regular singularities, then 

\begin{equation}
h^p(H^1(S, \mathcal{V}))=-\delta^p(V)-\left ( h^p(V)-h^{p-1}(V) \right )(2g(S)-2)+\delta^{p-1}+\sum_s\nu_s(\bar{\theta})(V)
\end{equation}

\noindent
where $V=\mathcal{V} \otimes \mathcal{O}_S$ and $h^i(V)=\dim Gr_F^i(V)$. 
\end{theorem}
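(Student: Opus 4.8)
The plan is to realize each Hodge number $h^{p}\big(H^{1}(S,\mathcal V)\big)$ as the dimension of a hypercohomology group of a two--term Higgs complex on the curve $S$, evaluate it by Riemann--Roch, and then absorb the behaviour at the singular points into the rank--drop defects $\nu_{s}(\bar\theta)$. Since $\mathcal V$ underlies a polarized variation of Hodge structure with regular singularities, $H^{1}(S,\mathcal V)$ carries a pure Hodge structure and the Hodge--de Rham spectral sequence degenerates at $E_{1}$; concretely I would use that
\[
\Gr_{F}^{p}H^{1}(S,\mathcal V)=\mathbb H^{1}\!\Big(S,\big[\,V_{e}^{p}\xrightarrow{\ \bar\theta\ }V_{e}^{p-1}\otimes\Omega_{S}^{1}(\log E)\,\big]\Big),
\]
where $V_{e}$ is the Deligne extension of $V=\mathcal V\otimes\Ob_{S}$ and the graded map is the Higgs field $\Gr_{F}^{p}\nabla$. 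Irreducibility and nontriviality of $\mathcal V$ force $H^{0}(S,\mathcal V)=H^{2}(S,\mathcal V)=0$ (no global invariants, and none for $\mathcal V^{\vee}$ by Serre duality), so the $\mathbb H^{0}$ and $\mathbb H^{2}$ of every graded complex vanish and $h^{p}(H^{1})$ is the whole $\mathbb H^{1}$.

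With $\mathbb H^{0}=\mathbb H^{2}=0$ the dimension is minus the Euler characteristic of the complex, so
\[
h^{p}\big(H^{1}(S,\mathcal V)\big)=\chi\big(S,V_{e}^{p-1}\otimes\Omega_{S}^{1}(\log E)\big)-\chi\big(S,V_{e}^{p}\big).
\]
Applying Riemann--Roch on $S$, with $\deg V_{e}^{p}=\delta^{p}(V)$, $\operatorname{rank}V_{e}^{p}=h^{p}(V)$ and $\deg\Omega_{S}^{1}=2g(S)-2$, already yields the terms $\delta^{p-1}-\delta^{p}$ and $-(h^{p}-h^{p-1})(2g-2)$ of the asserted identity. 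What remains is a sum of purely local contributions supported on $E$, coming from the logarithmic twist $\deg\Omega_{S}^{1}(\log E)=2g-2+\#E$ and from the discrepancy between the naive logarithmic cohomology and the pure (intersection) Hodge filtration.

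The heart of the argument, and the step I expect to be the main obstacle, is to identify that residual local sum with $\sum_{s}\nu_{s}(\bar\theta)(V)$. I would treat one point $s\in E$ at a time: by regular singularities pass to the limiting mixed Hodge structure, write the residue $N_{s}=\operatorname{Res}_{s}\nabla$ and the induced map $\bar\theta_{s}$ in the block form recalled above for $\bar\theta_{s_{i}}$, and read off the rank it drops on $V_{e}^{p}/V_{0,e}^{p}$. Exactly as in the weight one curvature computation of \cite{GGK} that produces the degree formula, the correction at $s$ between the logarithmic and the pure Hodge number equals $\nu_{s}(\bar\theta)$; summing over $s\in E$ converts the leftover $\#E$--terms into $\sum_{s}\nu_{s}(\bar\theta)(V)$ and gives the theorem. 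Carrying this local analysis over a base of arbitrary genus, rather than over $\P^{1}$ as in \cite{SD}, is precisely where the difficulty lies, since one must track the nilpotent orbit and the position of $\bar\theta$ relative to its limit at each point; irreducibility is what guarantees purity and the vanishing of all spurious $\mathbb H^{0},\mathbb H^{2}$ contributions throughout. As a consistency check, summing the identity over $p$ collapses the telescoping $\delta$-- and $(2g-2)$--terms and recovers the Euler--Poincar\'e count $\dim H^{1}(S,\mathcal V)=-\chi(S,\mathcal V)$, in agreement with the Euler characteristic formula announced in the introduction.
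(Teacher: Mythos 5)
Your proposal follows essentially the same route as the paper's proof: realizing $Gr_F^p H^1(S,\mathcal{V})$ as the hypercohomology of the graded (Higgs) pieces of the de Rham resolution, using $E_1$-degeneration and the vanishing of $H^0$ and $H^2$ forced by irreducibility to reduce everything to an Euler characteristic, and then applying Riemann--Roch with the local contributions at the singular points absorbed into $\sum_s \nu_s(\bar{\theta})(V)$. The local step you flag as the main obstacle is treated no more explicitly in the paper, which simply invokes $\deg(Gr_F^{p-1}V)=\delta^{p-1}(V)+\sum_s\nu_s(\bar{\theta})(V)$, so your sketch matches both the strategy and the level of detail of the original argument.
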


\begin{proof}
Set $\mathcal{V} =g_* \left (R^1h_*\mathbb{C} \right )$ and $V=\mathcal{V} \otimes \mathcal{O}_S$. Consider the resolution  

\begin{equation}
{H}^{\bullet}: 0 \to \mathcal{V} \longrightarrow \mathcal{V} \otimes \mathcal{O}_S \stackrel{\nabla}{\longrightarrow} \mathcal{V} \otimes \Omega_{S}^1
\end{equation}

\noindent 
Then by our assumption its only non-zero cohomology $H^1(S,\mathcal{V})$ is a HS of $wt=2$, 

\begin{equation}
F^p(\mathcal{V} \otimes \Omega_S^{\bullet})=F^{p-\bullet}\mathcal{V} \otimes \Omega_S^{\bullet}
\end{equation}

\noindent 
We know that the spectral sequence 

\begin{equation} 
\mathbb{H}^{r+s}(S,Gr_F^r(\Omega_S^{\bullet} \otimes\mathcal{V})) \Rightarrow \mathbb{H}^{r+s}(S, \mathcal{V})
\end{equation} 

\noindent 
degenerates at $E_1$. Computing the Euler characteristics on $Gr_F^p$ of (119), gives

\begin{equation}
\begin{aligned}
h^p\left ( H^1(S, \mathcal{V}) \right )&=\chi(Gr_F^p\mathbb{H}^{\bullet}(S,V))=\chi(\mathbb{H}^{\bullet}(S,Gr_F^pV)), \qquad \text{degeneration at}\ E_1 \\
&=-\chi(S, Gr_F^pV)+\chi(S,\Omega_S^1 \otimes Gr_F^{p-1}V)
\end{aligned}
\end{equation}

\noindent 
Applying the Riemann-Roch theorem we get 

\begin{equation}
h^p(H^1(S, \mathcal{V}))=-\delta^p(V)-h^p(V)(2g(S)-2)+\delta^{p-1}+h^{p-1}(V)(2g(S)-2)+\sum_s\nu_s(\bar{\theta})(V)
\end{equation}

\noindent
where we have used $\deg(gr_F^{p-1}V)=\delta^{p-1}(V)+\sum_s\nu_s(\bar{\theta})(V)$.
 
\end{proof}

\begin{remark}
A version of Theorem 5.14 over $\mathbb{P}^1$ for tensor product of a (single) VHS with a line bundle is proved in \cite{SD}. It is also possible to formulate similar formulas for the fibrations $X/S, \ Y/S$ separately. 
\end{remark}

\vspace{0.3cm}

\textbf{Declaration of interest}: There is no declaration of interests by authors.

\end{document}